\newtheorem{theorem}{Theorem}[]
\newtheorem{lemma}[theorem]{Lemma}
\newtheorem{proposition}[theorem]{Proposition}
\theoremstyle{definition}
\theoremstyle{remark}
\newtheorem{remark}[theorem]{Remark}
\numberwithin{equation}{section}
\title
[Extremal domains for the $p$-Laplacian operator]
{Geometric properties of extremal domains for the $p$-Laplacian operator}
\author[Carvalho]{Francisco G. Carvalho}
 \address{\!\!\!\begin{tabular}{l}
 Universidade Federal do Piauí  \\
 Picos, Piauí, Brazil
 \end{tabular}
 }
 \email{franciscogsc.mat@ufpi.edu.br}
\author[Cavalcante]{Marcos P. Cavalcante}
 \address{\!\!\!\begin{tabular}{l}
 Universidade Federal de Alagoas \\
 Instituto de Matemática - IM \\
 Centro de Pesquisa em Matemática Computacional - CPMAT \\
 Maceió, Alagoas, Brazil 
 \end{tabular}
 }
 \email{marcos@pos.mat.ufal.br}
\numberwithin{equation}{section}
\numberwithin{theorem}{section}
\subjclass[2010]{35Nxx, 35Pxx, 49Kxx. 49Qxx}
\keywords{The $p$-Laplacian operator;  Overdetermined problems; The moving plane method.}
\date{\today}
\begin{document}

\maketitle

\begin{abstract}
In this  paper, we explore the geometric properties of unbounded extremal domains for the $p$-Laplacian operator in both Euclidean and hyperbolic spaces.
Assuming that  the  nonlinearity grows at least as  the nonlinearity of the eigenvalue problem,  we prove that these domains exhibit remarkable geometric properties and cannot be arbitrarily wide.
    In two dimensions, we prove that such domains with connected complements  must necessarily be balls. 
In the hyperbolic space, we highlight the constraints on extremal domains and the geometry of their asymptotic boundaries.
\end{abstract}


\section{Introduction}

Let $\Omega \subset \mathbb{R}^n$ be a bounded domain in Euclidean space with $C^{2,\alpha}$ boundary, and let $\Delta_p u = \textrm{div}(\|\nabla u\|^{p-2} \nabla u)$ denote the $p$-Laplacian operator, where $1 < p < \infty$.
It is well known that the nonlinear eigenvalue problem
\begin{equation*}
	\left\{
	\begin{array}{rl}
		\Delta_p u+\lambda |u|^{p-2}u=0& \textrm{in } \Omega,\\
		u=0 & \textrm{on }  \partial\Omega,           
	\end{array}
	\right.
\end{equation*}
admits a principal eigenvalue associated to a positive eigenfunction.
This first eigenvalue, denoted by $\lambda_{1,p}(\Omega)>0$, 
is  simple, isolated, and can be variationally characterized as
\begin{equation}\label{lambda_variacoinal}
\lambda_ {1,p}(\Omega) = \inf \bigg\{\frac{\int_\Omega \|\nabla u\|^p \, dM}{\int_\Omega |u|^p\, dM}:
u\in W^{1,p}_0(\Omega), u\neq 0 \bigg \}.
\end{equation}

In particular, the first eigenvalue functional $\Omega \mapsto \lambda_ {1,p}(\Omega)$ is well-defined for all bounded smooth domains in $\mathbb{R}^n$, and we can compute its first variation (Hadamard formula) as done, for instance, in \cite{AnoopBobkovSasi, anisa15, garciamelian01, huang18}.

In fact, if $V$ is a smooth vector field along $\partial \Omega$, then the first derivative of $\lambda_{1,p}(\Omega)$ with respect to $V$ is given by
\begin{equation}\label{1stVariation}
    \delta_V[\lambda_{1,p}(\Omega)] = -(p-1)\displaystyle\int_{\partial\Omega}\langle V,\nu \rangle\left|\frac{\partial u_1}{\partial \nu}\right|^pd\sigma,
\end{equation}
where $u_1$ is the positive first eigenfunction associated with $\lambda_{1,p}(\Omega)$, whose $L^p$ norm is 1, and $\nu$ denotes the unit outward vector field along $\partial \Omega$.

In this setting, we say that $\Omega$ is  an
\emph{extremal domain for $\Delta_p$}
if it is a critical point for the first eigenvalue functional under volume-preserving variations. 
Note that the vector field $V$ generates a volume preserving variation 
if and only if
\[
\int_{\partial\Omega}\langle V,\nu \rangle d\sigma =0.
\]

Thus, from on (\ref{1stVariation}), we can conclude that $\Omega$ is an extremal domain for $\Delta_p$ if and only if $\frac{\partial u_1}{\partial \nu}$ is constant along the boundary. This condition is equivalent to the existence of a positive solution to the following overdetermined problem:
\begin{equation*}
\left\{
	\begin{array}{rl}
		\Delta_p u+\lambda u^{p-1}=0& \textrm{in } \Omega,\\
		u=0 & \textrm{on }  \partial\Omega,\\
		\frac {\partial u}{\partial \nu} =c & \textrm{on }  \partial\Omega.   
	\end{array}
	\right.
\end{equation*}

More generally, following the nomenclature in \cite{EspinarFarinaMazet} and \cite{EspinarMazet}, if $M^n$ is a Riemannian manifold, and $f:[0,\infty)\to \mathbb{R}$ is a Lipschitz  function, we define a smooth domain $\Omega\subset M^n$, whether bounded or not, as being \emph{$f$-extremal for the $p$-Laplacian operator} if there exists a positive solution $u\in C^{1,\alpha}(\Omega)$ to the problem:

\begin{equation}\label{problema1}
	\left\{
	\begin{array}{rl}
		\Delta_p u+f(u)=0& \textrm{in } \Omega,\\
		u=0 & \textrm{on }  \partial\Omega,\\
		\frac {\partial u}{\partial \nu} =c & \textrm{on }  \partial\Omega.
	\end{array}
	\right.
\end{equation}
Here, the $p$-Laplacian operator is defined as in the Euclidean space, but using the divergent and the gradient induced by the Riemannian metric of $M$.

Problem (\ref{problema1}) appears naturally in many problems 
from mathematical physics and is of great interest 
for mathematical analysis. 
In this article we will explore 
some geometrical properties of such domains in the Euclidean
space and in the hyperbolic space.

In the 50s, Aleksandrov \cite{aleksandrov58} introduced the celebrated reflection method to prove that an embedded closed hypersurface with constant mean curvature in the Euclidean space is a round sphere. Some years later, in 1971, Serrin \cite{Serrin71} adapted Aleksandrov's technique to solve some PDE problems. In particular, Serrin proved that if $p=2$ and $\Omega\subset \mathbb{R}^n$ is a bounded domain with a smooth boundary that admits a solution to problem (\ref{problema1}), then $\Omega$ is a ball. It is worth noting that Aleksandrov's theorem also holds for embedded hypersurfaces in the hyperbolic space or in the hemisphere and for more general elliptic equations of the principal curvature than the mean curvature (see \cite{Alexandrov62}), as well as Serrin's problem also holds for bounded domains in these same spaces, as proved in \cite{prajapatkumaresan98}. After Serrin, this technique became known as the moving planes method and has been successfully applied to a large number of problems, both in geometry and PDEs.

In light of Serrin's result, problem (\ref{problema1}) was investigated by Berestycki, Caffarelli, and Niremberg \cite{BCN} for unbounded domains $\Omega$ in Euclidean space when $p=2$. Their work led them to propose a conjecture that if $\mathbb{R}^n\setminus\Omega$ is connected, then $\Omega$ must be one of the following: a half-space, a ball, or the complement of a ball, a generalized cylinder, or the complement of a generalized cylinder. Counterexamples to this conjecture were first constructed by Sicbaldi \cite{sicbaldi10} when $n\geq 3$, and also by the works of Schlenk and Sicbaldi \cite{Schlenksicbaldi12}, Pacard, Del Pino, and Wei \cite{pacardpinowei15}, Fall, Minlend, and Weth \cite{Fall_Minlend_Weth17}, and Ros, Ruiz, and Sicbaldi \cite{RosRuizSicbaldi17}, \cite{rosruizsicbaldi20}. This latter article deals with the case of domains in the plane.

The investigation of problem (\ref{problema1}) when $p=2$ revealed  profound connections between the geometry of such domains and constant mean curvature hypersurfaces, as demonstrated in the work of Ros and Sicbaldi \cite{RosSicbaldi13}. In their paper, they employed  the moving planes method, assuming the  condition $f(t)\geq \lambda t$ and exploiting the linearity of the Laplacian operator.  On the other hand, the moving planes method can also be applied to the $p$-Laplacian operator, leading to a generalized version of Serrin's theorem. Specifically, if $\Omega\subset\mathbb{R}^n$ is a smooth bounded domain that admits a solution to problem (\ref{problema1}), then $\Omega$ must be a ball \cite{BrockHenrot, DamascelliPacella, GarofaloLewis}. Consequently, it is natural to extend the consideration of problem (\ref{problema1}) to unbounded domains, as Ros and Sicbaldi have done for the case of $p=2$ in \cite{RosSicbaldi13}.
 
In our first theorem, we prove that $f$-extremal domains for $\Delta_p$ cannot be arbitrarily wide. 
Namely, we denote by $R_\lambda>0$ the radius of a ball whose first eigenvalue of $\Delta_{p}$ is $\lambda$. Then we have:
 

\begin{theorem} \label{teo1}
Given $p>\frac{2n+2}{n+2}$, let $\Omega\subset 
\mathbb R^n$ be a domain admitting a positive weak solution $u\in C^{1,\alpha}(\Omega)$ of the equation
\[
\Delta_p u+f(u)=0,
\]
with  $f(t)\geq \lambda t^{p-1}$,  for some constant   $\lambda>0$. Then, $\Omega$ does not contain any closed ball of radius $R_\lambda$. 

Moreover, if $u$ also satisfies the boundary condition in the problem (\ref{problema1}), that is, if $\Omega$ is 
a $f$-extremal domain for the $p$-Laplacian, 
then, there exists $R_\lambda>0$ such that  either the closure $\overline{\Omega}$
does not contain any closed ball of radius $R_\lambda$ or
$\Omega$ is itself a ball of radius $R_\lambda$.
\end{theorem}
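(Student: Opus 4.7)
My plan is to use the Allegretto--Huang--Picone inequality for the $p$-Laplacian to compare the positive solution $u$ on a round sub-ball of $\Omega$ with the first positive Dirichlet eigenfunction of that ball. The key point is that by the definition of $R_\lambda$ one has $\lambda_{1,p}(B_{R_\lambda})=\lambda$, matching the constant in the hypothesis $f(t)\geq \lambda t^{p-1}$, so the Picone inequality will be forced to be an equality and rigidity will follow.

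For the first assertion I would argue by contradiction. Suppose $\Omega$ contains a closed ball $\overline B$ of radius $R_\lambda$; since $\overline B \subset \Omega$ and $u\in C^{1,\alpha}(\Omega)$ is positive, $u$ is bounded away from $0$ on the compact set $\overline B$. Let $\varphi\in W^{1,p}_0(B)$ be the positive first eigenfunction. The Picone inequality
\[
\int_B |\nabla\varphi|^p \;\geq\; \int_B \frac{\varphi^p}{u^{p-1}}\,(-\Delta_p u),
\]
combined with $-\Delta_p u = f(u)\geq \lambda u^{p-1}$ and the eigenvalue identity $\int_B |\nabla \varphi|^p = \lambda \int_B \varphi^p$, collapses to equality throughout. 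This forces $\varphi = \kappa u$ on $B$ for some $\kappa>0$, which is incompatible with $\varphi|_{\partial B}=0$ while $u|_{\partial B}>0$.

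For the second assertion, suppose $\Omega$ is $f$-extremal and $\overline\Omega$ contains a closed ball $\overline B$ of radius $R_\lambda$. I would first establish the geometric fact that $B\subset \Omega$: the $f$-extremality makes $\partial\Omega$ a smooth hypersurface, so if a point of $\partial\Omega$ lay in the open $B$, a small neighborhood of it would be contained in $B$ and meet $\mathbb R^n\setminus \overline\Omega$, contradicting $B\subset \overline\Omega$. By the first part $\overline B \not\subset \Omega$, so $\partial B$ must touch $\partial\Omega$. Running the same Picone computation on $B$ (which only needs $u>0$ on the open ball) again yields $\varphi=\kappa u$ on $B$; continuity of $u$ up to $\overline\Omega$ propagates this to $\overline B$, forcing $u\equiv 0$ on $\partial B$, hence $\partial B\subset \partial\Omega$. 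Connectedness of $\Omega$ then gives $\Omega=B$, because any path in $\Omega$ from $B$ to a point of $\Omega\setminus \overline B$ would have to cross $\partial B\subset \partial\Omega$.

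The step I expect to be most delicate is the rigorous justification of the integration by parts in Picone, given only $C^{1,\alpha}$ regularity for $u$ and the degenerate or singular nature of $\Delta_p$; this is plausibly where the restriction $p>(2n+2)/(n+2)$ enters, via the regularity of the eigenfunction $\varphi$ and the Hopf-type boundary information needed to identify $\varphi$ with a multiple of $u$. A secondary subtlety is the geometric step in the extremal part, which relies essentially on smoothness of $\partial\Omega$; without it, $\partial\Omega$ could in principle pass through the interior of $B$, and one would have to set up a limiting argument with $u_{\varepsilon}=u+\varepsilon$ to handle the possible vanishing of $u$ inside $B$.
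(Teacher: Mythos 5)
Your argument is correct in outline, but it takes a genuinely different route from the paper's. The paper proves the first assertion by sliding the radial eigenfunction of $B_{R_\lambda}$: one scales $v_\epsilon=\epsilon v$ until its graph touches that of $u$ from below at an interior point and then invokes the Damascelli--Sciunzi strong comparison principle (Theorem \ref{PCF}), which is exactly where the hypothesis $p>\frac{2n+2}{n+2}$ enters; for the second assertion the paper uses the overdetermined Neumann data in an essential way: when the tangency occurs only at a point $y_0\in\partial\Omega\cap\partial B_{R_\lambda}$, both normal derivatives equal $c$, and the contradiction comes from writing $\Delta_p u-\Delta_p v_\epsilon$ as a divergence-form operator that is uniformly elliptic near $y_0$ (since $|\nabla v_\epsilon|$ is bounded below near $\partial B_{R_\lambda}$) and applying the Hopf boundary point lemma to $w=u-v_\epsilon$. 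Your Picone--Allegretto--Huang chain $\lambda\int_B\varphi^p=\int_B|\nabla\varphi|^p\geq\int_B\varphi^p u^{1-p}f(u)\geq\lambda\int_B\varphi^p$ replaces all of this by a rigidity statement: equality forces $\varphi=\kappa u$, which clashes with the boundary values in the first part and, in the second part, forces $u|_B$ to be a first eigenfunction of $B$, whence $\partial B\subset\partial\Omega$ and $\Omega=B$ by connectedness. What your route buys: it works for every $p>1$ (the restriction on $p$ is an artifact of the comparison principle, not of the conclusion), and your second part never uses the Neumann condition --- only smoothness of $\partial\Omega$ and positivity of $u$ --- so it is in that respect sharper. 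What it loses: the paper's tangency-plus-Hopf mechanism is precisely what drives the reflection principle (Proposition \ref{reflection}) and all the subsequent moving-plane arguments, so the Picone shortcut cannot serve as infrastructure for the rest of the paper. The two technical points you flag are the right ones; note that the regularization $\varphi^p/(u+\varepsilon)^{p-1}\in W^{1,p}_0(B)$ followed by monotone convergence and Fatou on the nonnegative Picone integrand is already needed in your second part (where $u$ vanishes at the contact point of $\partial B$ with $\partial\Omega$), not only in the scenario where $\partial\Omega$ enters $B$.
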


We point out that
$R_{\lambda}$ is the positive root of the function $z(t)$ that satisfies the following boundary value problem:
$$
\left\{ 
\begin{array}{rll}
(p-1)|z'(t)|^{p-2}z''(t)+\dfrac{(n-1)}{t}|z'(t)|^{p-2}z'(t)+\lambda|z(t)|^{p-2}z(t)=0, & & \\
z'(0)=0,                                                            & &  \\
z(0)=1.                                                             & &
\end{array}
\right.
$$

In the context of two dimensions, we can derive the following classification result:
\begin{theorem}\label{teo2}
Given $p>\frac{3}{2}$,
let $\Omega\subset 
\mathbb R^2$ be a $f$-extremal domain for the $p$-Laplacian, with  $f(t)\geq \lambda t^{p-1}$  
for some    $\lambda>0$. 
If  $\mathbb{R}^2\setminus\overline{\Omega}$ is connected, then  $\Omega$ is a ball.
\end{theorem}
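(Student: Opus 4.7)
\emph{Proof plan.} The argument proceeds by contradiction, combining the rigidity dichotomy of Theorem \ref{teo1} with the moving planes method tailored to the planar setting. By Theorem \ref{teo1}, either $\Omega$ is a ball of radius $R_\lambda$ (and there is nothing to prove), or $\overline{\Omega}$ contains no closed ball of radius $R_\lambda$. Assume the latter; this gives the quantitative narrowness estimate
\[
\sup_{x\in\Omega}\mathrm{dist}(x,\partial\Omega)<R_\lambda,
\]
so $\Omega$ is entirely contained in an $R_\lambda$-tubular neighborhood of $\partial\Omega$. Moreover, the hypothesis that $\mathbb{R}^{2}\setminus\overline{\Omega}$ is connected forces $\Omega$ to be simply connected, a strong topological restriction on its unbounded ends.

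\emph{Moving planes in each direction.} For every direction $e\in\mathbb{S}^{1}$, I would run the moving planes method associated with the hyperplanes $T_t=\{x\cdot e=t\}$. Set $H_t^{+}=\{x\cdot e>t\}$, $\Omega_t=\Omega\cap H_t^{+}$, $\Omega_t^{*}=\sigma_t(\Omega_t)$ with $\sigma_t$ the reflection across $T_t$, and
\[
t^{*}(e)=\inf\{t\in\mathbb{R}:\Omega_s^{*}\subset\Omega\text{ and }u\circ\sigma_s\leq u\text{ on }\Omega_s^{*}\text{ for every }s\geq t\}.
\]
The weak and strong comparison principles and the Hopf boundary point lemma for the $p$-Laplacian of \cite{DamascelliPacella, GarofaloLewis}, valid in the regime $p>3/2$, show that at $t=t^{*}(e)$ one of the two standard alternatives occurs: either $\overline{\Omega_{t^{*}}^{*}}$ is internally tangent to $\partial\Omega$ at an interior reflected point, or $T_{t^{*}}$ meets $\partial\Omega$ orthogonally at some point. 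Combined with the overdetermined condition $\partial u/\partial\nu=c$ on $\partial\Omega$, either alternative forces $\sigma_{t^{*}(e)}(\Omega)=\Omega$, producing a reflection symmetry of $\Omega$ whose axis is $T_{t^{*}(e)}$.

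\emph{Finiteness of $t^{*}(e)$ and conclusion.} The crucial step is to show $t^{*}(e)>-\infty$ for every $e$. Here I would follow the planar strategy of Ros--Ruiz--Sicbaldi \cite{rosruizsicbaldi20}: for a simply connected, unbounded, narrow planar domain one proves geometrically that $\Omega_t^{*}$ cannot remain inside $\Omega$ indefinitely, for otherwise the reflected caps would eventually have to re-enter through the connected complement, violating $\Omega_t^{*}\subset\Omega$. Once reflection symmetries are produced in every direction $e\in\mathbb{S}^{1}$, composing reflections across axes with varying directions yields either a rotation about a common point or a nontrivial translation; a translation symmetry combined with the narrowness estimate would force $\Omega$ to be a strip, whose complement is disconnected, contradicting hypothesis. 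Hence $\Omega$ is rotationally symmetric, therefore a disk, and Theorem \ref{teo1} pins its radius at $R_\lambda$. This contradicts the standing assumption that $\overline{\Omega}$ contains no closed ball of radius $R_\lambda$, so the first alternative of Theorem \ref{teo1} must have held and $\Omega$ is a ball.

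\emph{Main obstacle.} The principal difficulty is the finiteness step $t^{*}(e)>-\infty$: narrowness by itself does not obviously exclude spiraling tubes or meandering strips, and one genuinely needs to combine the simple connectivity of $\Omega$ with the Jordan-curve structure of $\partial\Omega$ in the plane to run the $p$-Laplacian reflection. A secondary technical difficulty is the use of Hopf-type boundary lemmas for the degenerate $p$-Laplacian at points where $\nabla u$ may vanish, controlled by the regularity assumption $p>3/2$ inherited from Theorem \ref{teo1}.
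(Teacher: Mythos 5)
Your overall logic (invoke the dichotomy of Theorem \ref{teo1}, then show the narrow alternative cannot survive) is in the right spirit, but the route you take has a genuine gap, and it is not the route the paper takes. The central problem is the moving planes step ``in each direction $e\in\mathbb{S}^1$''. For an unbounded $\Omega$ the caps $\Omega_t=\Omega\cap\{x\cdot e>t\}$ need not be bounded for any $t$ (narrowness does not prevent $\Omega$ from escaping to infinity inside the half-plane $\{x\cdot e>t\}$), and every comparison tool available here --- Theorem \ref{PCF} and the reflection principle of Proposition \ref{reflection} --- is stated only for \emph{bounded} connected components of $\Omega\setminus P$. So the procedure may fail even to start, let alone reach a critical position: the set over which you take the infimum defining $t^{*}(e)$ can be empty. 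You yourself flag the finiteness of $t^{*}(e)$ as ``the main obstacle'' and defer it to a geometric claim that reflected caps would have to re-enter through the connected complement, but that is precisely the content that must be proved, and no proof is given. Everything downstream --- the reflection symmetry in every direction, the rotation-versus-translation alternative --- is therefore unsupported. A secondary soft spot: ``narrow and translation-invariant implies a strip'' is asserted, not proved.

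The paper closes this gap by a different, essentially topological reduction. Connectedness of $\mathbb{R}^2\setminus\overline{\Omega}$ forces $\Omega$ to be diffeomorphic to a disc with a single boundary curve, hence either bounded or with exactly one end. Theorem \ref{teo3}(2) --- proved via the height estimate of Lemma \ref{height} and Proposition \ref{properties}, following the end analysis of Ros--Sicbaldi --- rules out a one-ended $f$-extremal domain. Therefore $\Omega$ is bounded, and the Serrin-type symmetry result of Proposition \ref{reflection}, applied to a bounded domain where the caps are automatically pre-compact, yields that $\Omega$ is a disk. To salvage your approach you would need a substitute for Theorem \ref{teo3}: some a priori control on the ends guaranteeing that the caps are pre-compact and that the critical position is attained at finite $t$.
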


This result follows from the following more general geometrical description of $f$-extremal domains of finite topology  (see Subsection \ref{planar} for more details).

\begin{theorem}\label{teo3}
Given $p>\frac{3}{2}$,
let $\Omega\subset 
\mathbb R^2$ be a finite topology 
$f$-extremal domain for the $p$-Laplacian, with  $f(t)\geq \lambda t^{p-1}$  
for some constant   $\lambda>0$. 
Then we have
   \begin{enumerate}
		\item  If $E$ is an end of $\Omega$, then $E$ stays at a bounded distance from a straight line;
		\item  $\Omega$ cannot have only one end;
		\item  if $\Omega$ has exactly  two ends, then there exists a line $L$ such that $\Omega$ is at a bounded distance from $L$.
	\end{enumerate}
\end{theorem}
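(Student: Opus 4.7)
The plan is to combine Theorem~\ref{teo1} (bounded inradius: $\overline{\Omega}$ contains no closed ball of radius $R_\lambda$) with the moving planes method (MPM) for $\Delta_p$ developed in \cite{DamascelliPacella,BrockHenrot,GarofaloLewis}, exploiting crucially that we are in dimension two.

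For Part (1), I fix an end $E$ of $\Omega$; by finite topology, $E$ can be realized as an unbounded connected component of $\Omega\setminus K$ for a sufficiently large compact set $K\subset\mathbb{R}^2$. Theorem~\ref{teo1} forces every point of $E$ to lie within distance $R_\lambda$ of $\partial\Omega$. In dimension two, combined with finite topology, this forces $E$ to be \emph{strip-like}: bounded by exactly two properly embedded arcs $\gamma_1,\gamma_2\subset\partial\Omega$ escaping to infinity, with $\mathrm{dist}(\gamma_1,\gamma_2)\le 2R_\lambda$ asymptotically (half-planes, sectors, exteriors of compact sets, and annular ends all have infinite inradius and are thus excluded). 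To upgrade ``strip-like'' to ``bounded distance from a line'', I parametrize $\gamma_i$ by arclength and argue that its unit tangent converges at infinity: if $\gamma_i$ wound indefinitely, then either the two arcs would eventually separate (violating the strip condition) or an inscribed disk of radius $\ge R_\lambda$ would appear in $\overline{\Omega}$, contradicting Theorem~\ref{teo1}. The common asymptotic tangent direction of $\gamma_1$ and $\gamma_2$ then defines the line $L$.

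Parts (2) and (3) follow by MPM adapted from Ros--Sicbaldi \cite{RosSicbaldi13} and Ros--Ruiz--Sicbaldi \cite{rosruizsicbaldi20}. For Part (2), assuming $\Omega$ has a unique end with asymptotic direction $v$, I slide half-planes $\Pi_t=\{x\cdot v<t\}$ from $t=+\infty$ downward; by the strip geometry of Part (1) the reflection of $\Omega\cap\{x\cdot v>t\}$ across $\partial\Pi_t$ lies in $\Omega$ for $t$ large. Decreasing $t$ to a critical value $t^*$ and applying MPM yields a symmetry of $\Omega$ about $\partial\Pi_{t^*}$, which creates a second end in direction $-v$, contradicting uniqueness. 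For Part (3), with two ends $E_1,E_2$ near lines $L_1,L_2$, MPM perpendicular to $L_1$ forces $L_1\parallel L_2$ (else the reflected copy of $E_2$ would appear as a third end, contradicting that $\Omega$ has exactly two ends); a final MPM along the common direction shows that $L_1$ and $L_2$ coincide up to the transverse offset already absorbed by the strip width, so $\Omega$ itself stays at bounded distance from a single line.

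The main obstacle is the asymptotic-direction argument in Part (1): bounded inradius alone does not forbid slowly winding strips, so establishing the existence of a common asymptotic tangent for $\gamma_1,\gamma_2$ requires combining the finite-topology structure of $E$ with the $C^{1,\alpha}$ regularity of $\partial\Omega$ and localized MPM comparisons in half-disks along $\gamma_i$. Secondarily, applying MPM to $\Delta_p$ on unbounded $C^{1,\alpha}$ domains demands appropriate Hopf-type lemmas and a careful passage to the limit in the reflected configurations.
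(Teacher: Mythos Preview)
Your Parts (2) and (3) are in the right spirit --- both the paper and you defer to the Ros--Sicbaldi moving-plane scheme once Part (1) is in place. The genuine gap is in Part (1).

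You try to pass directly from bounded inradius (Theorem~\ref{teo1}) to ``$E$ stays near a line'', and you candidly flag the obstacle: bounded inradius does not exclude slowly winding strips. That obstacle is real and your proposed fix (``localized MPM comparisons in half-disks along $\gamma_i$'' forcing tangent convergence) is not an argument, only a hope. Consider a logarithmic spiral strip such as $\{(r\cos\theta,r\sin\theta): r>1,\ |\theta-\log r|<\varepsilon\}$: it is simply connected, has bounded inradius, is bounded by two properly embedded $C^\infty$ arcs at bounded mutual distance, and is homeomorphic to a half-cylinder --- yet it stays near no straight line. Nothing in your Step~1 (inradius $\Rightarrow$ strip-like) distinguishes a genuine $f$-extremal end from this picture, so Step~2 needs a new input.

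The paper supplies that input before ever touching the ends: via Proposition~\ref{properties} (the graph/non-orthogonality/reflection package from MPM) it proves the \emph{height estimate} of Lemma~\ref{height}: for \emph{every} line $L$ and every bounded connected component $C$ of $\Omega\cap L^+$, one has $h(C)\le 3R_\lambda$. This is strictly stronger than bounded inradius and kills spiral strips outright (cut the spiral by a line through its center and the bounded components have unbounded height). With Lemma~\ref{height} in hand, Part (1) --- and then (2), (3) --- follow the Ros--Sicbaldi template verbatim. So the missing idea in your proposal is precisely this uniform height bound for half-plane excursions; once you prove it (and its proof is where the MPM enters, not in a post-hoc tangent-limit argument), the rest of your outline goes through.
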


Our proofs use the moving plane method and the strong comparison theorem, as established by Damascelli and Sciunzi in \cite{DamascelliSciunzi06}. The technical condition on the lower bound for $p$ is derived from their work.

Furthermore, the same problem can be investigated in more general Riemannian manifolds. Specifically, when considering hyperbolic space, we can employ the reflection method and derive results that extend the previous work by Espinar and Mao \cite{espinarmao18}. In their study, Espinar and Mao examined the geometry and topology of extremal domains in Hadamard manifolds, leading to the discovery of narrowness properties, an upper bounds estimate for the Hausdorff dimension of the asymptotic boundary, and a characterization of the asymptotic boundary of extremal domains.

In the hyperbolic context, we can demonstrate that several of the results obtained in \cite{espinarmao18} for the classical Laplacian still apply to the $p$-Laplacian operator. In this regard, our first result is presented in the following theorem.

\begin{theorem} \label{theoremH}
Given $p>\frac{2n+2}{n+2}$, let $\Omega\subset 
\mathbb H^n$ be a domain in the hyperbolic space admitting a positive weak solution $u\in C^{1,\alpha}(\Omega)$ of the equation
\[
\Delta_p u+f(u)=0,
\]
with  $f(t)\geq \lambda t^{p-1}$,  for some constant   $\lambda>\frac{(n-1)^p}{p^p}$. 
Then, $\Omega$ does not contain any closed ball of radius $R_\lambda$. 

Moreover, if $u$ also satisfies the boundary condition in the problem (\ref{problema1}), that is, if $\Omega$ is 
a $f$-extremal domain for the $p$-Laplacian in $\mathbb H^n$,  then, there exists $R_\lambda>0$ such that  either the closure $\overline{\Omega}$
does not contain any closed geodesic ball of radius $R_\lambda$ or
$\Omega$ is itself a geodesic ball of radius $R_\lambda$.
\end{theorem}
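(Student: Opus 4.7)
The proof parallels that of Theorem \ref{teo1}, with modifications to account for hyperbolic geometry. I would first define $R_\lambda$ as the first positive zero of the radial solution $z(t)$ to the hyperbolic analog of the ODE appearing after Theorem \ref{teo1},
\[
(p-1)|z'|^{p-2}z'' + (n-1)\coth(t)\,|z'|^{p-2}z' + \lambda|z|^{p-2}z = 0,\quad z(0)=1,\; z'(0)=0,
\]
which governs the radial first $p$-eigenfunction on a hyperbolic geodesic ball. Standard spectral considerations show that $R\mapsto \lambda_{1,p}(B_R)$ on geodesic balls of $\mathbb H^n$ is continuous and strictly decreasing, with infimum $((n-1)/p)^p$ as $R\to\infty$ (this being the bottom of the $L^p$-spectrum of $\mathbb H^n$); hence the hypothesis $\lambda > (n-1)^p/p^p$ is exactly what guarantees the existence of a finite $R_\lambda>0$ with $\lambda_{1,p}(B_{R_\lambda})=\lambda$.

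For the first assertion I would argue by contradiction. Suppose $\overline{B}\subset\Omega$ for a geodesic ball $B$ of radius $R_\lambda$, and let $v$ be the positive first $p$-eigenfunction on $B$, normalized so that $v|_{\partial B}=0$. Since $u>0$ on $\overline B$, the function $v^p/u^{p-1}$ is an admissible test function in $W^{1,p}_0(B)$, and the Picone-type identity for the $p$-Laplacian on Riemannian manifolds yields
\[
\int_B |\nabla v|^p\,dM \;\geq\; \int_B \frac{v^p}{u^{p-1}}(-\Delta_p u)\,dM = \int_B \frac{v^p}{u^{p-1}} f(u)\,dM \;\geq\; \lambda \int_B v^p\,dM,
\]
with equality in the first inequality iff $v$ is proportional to $u$ on $B$. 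Since the outer terms agree by the choice of $R_\lambda$, equality holds throughout and $u=cv$ on $B$, contradicting the fact that $u>0$ on $\partial B\subset\Omega$ while $v=0$ there.

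For the overdetermined second assertion, $\overline B\subset\overline{\Omega}$ together with the first assertion forces $\partial B\cap\partial\Omega\neq\emptyset$, while still $B\subset\Omega$ (an open set cannot straddle the boundary of $\Omega$). Running the above argument with the regularized test function $v^p/(u+\varepsilon)^{p-1}$ and letting $\varepsilon\to 0^+$, one recovers the same chain of (in)equalities and again concludes $u=cv$ on $B$. Therefore $u\equiv 0$ on $\partial B$, so $\partial B\subset\partial\Omega$, and the connectedness of $\Omega$ gives $\Omega=B$, a geodesic ball of radius $R_\lambda$.

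The main obstacle is justifying the regularization step in this second assertion, where $\partial B$ is tangent to $\partial\Omega$ and the natural test function $v^p/u^{p-1}$ becomes singular at the contact set. Here I would invoke the strong comparison theorem of Damascelli and Sciunzi, whose range $p>(2n+2)/(n+2)$ matches the hypothesis in the theorem: it supplies the boundary regularity of $u$ and the non-degeneracy of $\nabla u$ needed to pass to the limit rigorously. Geometrically, the lower bound $\lambda>(n-1)^p/p^p$ is essential from the other side: without it no geodesic ball in $\mathbb H^n$ has first eigenvalue $\lambda$, and there is no ``critical ball'' against which the Picone comparison can be carried out.
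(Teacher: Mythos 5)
Your argument is correct in substance, but it follows a genuinely different route from the paper. The paper proves Theorem \ref{theoremH} by transplanting the proof of Theorem \ref{teo1} to $\mathbb H^n$ (via Proposition \ref{Prop-Rlambda} and the invariance of $\Delta_p$ under reflections): for the interior case it exploits the homogeneity of the eigenvalue equation to slide $v_\epsilon=\epsilon v$ up to a first touching point and then invokes the Damascelli--Sciunzi strong comparison principle (Theorem \ref{PCF}); for the tangency case it matches normal derivatives using the overdetermined condition $\partial u/\partial\nu=c$, linearizes the difference of the two $p$-Laplacians into a uniformly elliptic operator near the contact point, and contradicts the Hopf boundary point lemma. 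Your Picone-identity argument replaces all of this with the variational characterization of $\lambda_{1,p}$: the inequality $\int_B|\nabla v|^p\geq\int_B v^p u^{1-p}f(u)\geq\lambda\int_B v^p$ together with $\lambda_{1,p}(B_{R_\lambda})=\lambda$ forces the equality case $v=cu$, which is incompatible with $u>0$ on $\partial B$ in the interior case and forces $\partial B\subset\partial\Omega$, hence $\Omega=B$, in the tangency case. What your approach buys: it is self-contained, it never uses the Neumann condition in (\ref{problema1}) (only $u>0$ in $\Omega$ and continuity up to $\overline B$), and it works for every $p>1$, so the restriction $p>\frac{2n+2}{n+2}$ inherited from \cite{DamascelliSciunzi06} becomes unnecessary. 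What the paper's approach buys is the reflection principle (Proposition \ref{reflection}) as a byproduct, which is needed elsewhere.

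Two small corrections. First, your closing paragraph misattributes the role of the Damascelli--Sciunzi theorem: it is a comparison principle, not a regularity or nondegeneracy statement, and it is not what justifies the limit $\varepsilon\to0^+$. That limit needs no such input: since $0\leq\int_B L(u+\varepsilon,v)\leq\lambda\int_B v^p\bigl(1-u^{p-1}(u+\varepsilon)^{1-p}\bigr)\to0$ by monotone convergence, Fatou's lemma gives $L(u,v)=0$ a.e.\ in $B$ and hence $v=cu$ there; this is the step you should spell out, because the equality case must be extracted \emph{after} the limit, not before. Second, to pass from $v=cu$ on $B$ to $\Omega=B$ you should note explicitly that $\Omega$ is a smooth domain, so $\Omega=\mathrm{int}(\overline\Omega)$ and $\overline B\subset\overline\Omega$ indeed gives $B\subset\Omega$; then $u=0$ on $\partial B$ and $u>0$ in $\Omega$ yield $\partial B\subset\partial\Omega$, and connectedness finishes the argument as you say.
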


It is well known that while we can compactify the Euclidean space by adding a single point, the asymptotic boundary of the hyperbolic space $\mathbb{H}^n$ is diffeomorphic to a sphere $\mathbb{S}^{n-1}$. So, the behavior at infinity of noncompact sets in $\mathbb{H}^n$ can be very rich from the geometrical viewpoint. The next theorem, which is a consequence of Theorem \ref{theoremH}, asserts that the boundary at infinity of some $f$-extremal domains cannot be too large. We refer the reader to Section \ref{hyperbolic} for further definitions.

\begin{theorem}\label{boundary}
    Given $p>\frac{2n+2}{n+2}$, let $\Omega\subset \mathbb H^n$ be a $f$-extremal domain with  $f(t)\geq \lambda t^{p-1}$,  for some constant   $\lambda>\frac{(n-1)^p}{p^p}$. Then, there is no conical points in $\partial_\infty\Omega$ of radius $r>R_\lambda$. In particular, the Hausdorff dimension of $\partial_\infty\Omega$ is less than $n-1$.
\end{theorem}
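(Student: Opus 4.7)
The first assertion is immediate from Theorem \ref{theoremH}. Recall that a point $\xi\in\partial_\infty\Omega$ is a conical point of radius $r$ if there exists a geodesic ray in $\Omega$ converging to $\xi$ along which infinitely many open geodesic balls $B_r(p_k)\subset\Omega$ accumulate at $\xi$. If such a $\xi$ existed with $r>R_\lambda$, these open balls would in particular contain closed balls of radius $R_\lambda$ inside $\Omega$. Since $\partial_\infty\Omega\neq\emptyset$ forces $\Omega$ to be unbounded, $\Omega$ cannot itself be a geodesic ball of radius $R_\lambda$, and Theorem \ref{theoremH} is contradicted.

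For the Hausdorff dimension estimate, the plan is to deduce the \emph{porosity} of $\partial_\infty\Omega$ inside $\mathbb{S}^{n-1}=\partial_\infty\mathbb{H}^n$ in the Poincaré ball model, and then invoke the classical fact that a porous subset of an $(n-1)$-dimensional Riemannian manifold has Hausdorff dimension strictly less than $n-1$ (see, e.g., Mattila's monograph). Concretely, I would establish the existence of constants $c\in(0,1)$ and $\rho_0>0$ such that for every $\xi\in\partial_\infty\Omega$ and every $\rho\in(0,\rho_0)$ the spherical ball $B_{\mathbb{S}^{n-1}}(\xi,\rho)$ contains a sub-ball of radius $c\rho$ disjoint from $\partial_\infty\Omega$. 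To produce such a hole, I fix the radial geodesic from the origin to $\xi$, take a point $p$ on it at hyperbolic distance $t\sim\log(1/\rho)$, and note that the spherical shadow of the hyperbolic ball $B_{R_\lambda+1}(p)$ has spherical radius comparable to $\rho$. By Theorem \ref{theoremH}, $B_{R_\lambda+1}(p)$ cannot be contained in $\Omega$, so it intersects $\mathbb{H}^n\setminus\Omega$; exploiting the $C^{1,\alpha}$ regularity of $\partial\Omega$ and the exterior-ball property, one obtains an honest hyperbolic ball lying in $\mathbb{H}^n\setminus\overline{\Omega}$ whose shadow at infinity produces the desired spherical hole.

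The main obstacle is securing \emph{uniform} constants in the porosity estimate as $\xi$ and $\rho$ vary, in particular a uniform exterior-ball radius for $\partial\Omega$ near infinity. This demands controlling the local geometry of $\partial\Omega$ uniformly, which I expect to handle either by using the overdetermined condition $\partial_\nu u=c$ together with the regularity theory for the $p$-Laplacian to bound the second fundamental form of $\partial\Omega$, or alternatively by applying a hyperbolic isometry sending $\xi$ to a fixed reference point at infinity and exploiting isometry invariance of the problem to reduce everything to a compact model situation. Once uniform porosity is in hand, the strict Hausdorff dimension inequality $\dim_{H}\partial_\infty\Omega<n-1$ follows at once.
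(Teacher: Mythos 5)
Your treatment of the first assertion is correct and coincides with what the paper intends: a conical point of radius $r>R_\lambda$ would force $\Omega$ to contain closed geodesic balls of radius $R_\lambda$ (the cone $\mathcal{C}_z(y,r,s)$ contains the balls $\overline{B}_r(\gamma(t))$), contradicting the first part of Theorem \ref{theoremH}; the boundedness dichotomy you invoke is not even needed, since that part of Theorem \ref{theoremH} requires no boundary condition. For the Hausdorff dimension estimate the paper offers no argument of its own --- it defers entirely to \cite[Theorem 2.7]{espinarmao18} --- so there is nothing to compare line by line, but your porosity sketch contains a genuine gap, and it is not the uniformity of constants that you flag.

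The decisive step, ``an honest hyperbolic ball lying in $\mathbb{H}^n\setminus\overline{\Omega}$ whose shadow at infinity produces the desired spherical hole,'' is false. A point $\eta\in\mathbb{S}^{n-1}$ lies outside $\partial_\infty\Omega$ only if $\Omega$ fails to accumulate at $\eta$, i.e.\ only if an entire truncated cone over a spherical neighbourhood of $\eta$ is eventually disjoint from $\Omega$; a single bounded ball in the complement excludes $\Omega$ from nothing beyond that ball, and $\Omega$ may accumulate at every point of its shadow. Concretely, let $A=\mathbb{H}^n\setminus\bigcup_i\overline{B}_\epsilon(x_i)$ for an $R$-dense, $R$-separated net $\{x_i\}$ and small $\epsilon$: this is a connected open set containing no closed ball of radius $R+\epsilon$ (hence with no conical points of radius $>R+\epsilon$), whose complement contains uniform honest balls of radius $\epsilon$ everywhere, and yet $\partial_\infty A=\mathbb{S}^{n-1}$ has full dimension. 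Since your argument uses only the narrowness furnished by Theorem \ref{theoremH} together with boundary regularity --- both of which this example enjoys --- it cannot establish $\dim_H\partial_\infty\Omega<n-1$. Any correct proof must exploit the extremality of $\Omega$ beyond narrowness (for instance the reflection properties of Proposition \ref{reflection}, or the exclusion of entire solid cones at all scales rather than of single balls), which is precisely the content of the Espinar--Mao argument that the paper cites.
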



We conclude this paper with a version of Theorem \ref{teo2} in hyperbolic space. However, we want to emphasize that the proof is significantly different and takes into account a classification result for extremal domains in $\mathbb{H}^2$ with only one point at the asymptotic boundary (Theorem \ref{horoball}), which is of independent interest.

\begin{theorem}\label{teo6}
Given $p>\frac{3}{2}$,
let $\Omega\subset 
\mathbb H^2$ be a $f$-extremal domain for the $p$-Laplacian, with  $f(t)\geq \lambda t^{p-1}$  
for some    $\lambda>1/p^p$. 
If  $\mathbb{H}^2\setminus\overline{\Omega}$ is connected, then  $\Omega$ is a geodesic ball.
\end{theorem}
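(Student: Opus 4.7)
The plan is to analyze the asymptotic boundary $\partial_\infty \Omega \subset \mathbb{S}^1 = \partial_\infty \mathbb{H}^2$ and split into cases according to its cardinality, relying on the narrowness result of Theorem \ref{theoremH}, the dimension estimate of Theorem \ref{boundary}, and the classification result Theorem \ref{horoball} for one-point asymptotic boundary. First I would observe that, since $\lambda > 1/p^p = (n-1)^p/p^p$ for $n=2$, both Theorem \ref{theoremH} and Theorem \ref{boundary} apply, so $\overline{\Omega}$ contains no geodesic ball of radius $R_\lambda$ (unless $\Omega$ itself is such a ball), and $\partial_\infty \Omega$ has Hausdorff dimension strictly less than $1$. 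In particular, $\partial_\infty \Omega$ is totally disconnected in $\mathbb{S}^1$.

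Next I would use the topological hypothesis that $\mathbb{H}^2 \setminus \overline{\Omega}$ is connected to show that $\partial_\infty \Omega$ has at most one point. The idea is that two distinct points of $\partial_\infty \Omega$, together with the fact that $\partial_\infty \Omega$ is a proper totally disconnected subset of $\mathbb{S}^1$, would separate $\mathbb{S}^1$ into at least two arcs, each containing an asymptotic endpoint of a geodesic of $\mathbb{H}^2$ lying entirely in $\mathbb{H}^2\setminus\overline\Omega$; a Jordan-type separation argument in the closed disk model $\overline{\mathbb{H}^2}$ then forces $\mathbb{H}^2\setminus\overline\Omega$ to be disconnected, contradicting the hypothesis.

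With this reduction, only two cases remain. If $\partial_\infty\Omega=\varnothing$, then $\Omega$ is bounded, and the $p$-Laplacian version of Serrin's theorem in hyperbolic space (the analogue of \cite{BrockHenrot, DamascelliPacella, GarofaloLewis} obtained via the moving planes method along totally geodesic hyperplanes of $\mathbb{H}^2$, as in \cite{prajapatkumaresan98} for $p=2$) yields that $\Omega$ is a geodesic ball. If $\partial_\infty\Omega$ consists of a single point $\xi\in\mathbb{S}^1$, then Theorem \ref{horoball} classifies $\Omega$ as a horoball centered at $\xi$. In the half-plane model with $\xi=\infty$, such a horoball $\{y>y_0\}$ contains geodesic balls of arbitrarily large radius, which directly contradicts the narrowness conclusion of Theorem \ref{theoremH}; hence this case cannot occur.

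The main obstacle I anticipate is the topological step that forces $\#\partial_\infty\Omega\le 1$: one must combine the connectedness of $\mathbb{H}^2\setminus\overline\Omega$ with the fact that $\partial_\infty\Omega$ is closed in $\mathbb{S}^1$ and has dimension less than one, and handle carefully the geometry of how $\Omega$ can approach the sphere at infinity without its complement being disconnected by a geodesic chord. The appeal to Theorem \ref{horoball}, whose proof is credited as being of independent interest, is what makes the argument differ substantially from the Euclidean analogue (Theorem \ref{teo2}), since in $\mathbb{H}^2$ the end analysis of Theorem \ref{teo3} does not apply and the single-end case genuinely produces a new competitor (the horoball) that must be excluded by invoking the quantitative narrowness estimate.
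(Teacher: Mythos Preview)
Your overall architecture matches the paper's: split on the cardinality of $\partial_\infty\Omega$, dispose of the bounded case by the hyperbolic Serrin-type result, and in the one-point case invoke Theorem~\ref{horoball} to get a horoball, which is then excluded by the narrowness/conical-point obstruction. That part is fine, and your use of Theorem~\ref{theoremH} to kill the horoball is equivalent to the paper's use of Theorem~\ref{boundary} (a horoball has conical points of every radius and contains arbitrarily large balls).

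The substantive difference, and the place where your argument is genuinely weaker, is the reduction to $\#\partial_\infty\Omega\le 1$. You try to go through the Hausdorff-dimension statement of Theorem~\ref{boundary} to get total disconnectedness of $\partial_\infty\Omega$ and then run a Jordan-type separation argument in the closed disk. As you yourself flag, this step is delicate: from a point $w\notin\partial_\infty\Omega$ you do not automatically obtain a geodesic, or any curve, lying entirely in $\mathbb H^2\setminus\overline\Omega$, so the separation claim needs more than what you wrote. The paper bypasses this entirely. From the hypothesis that $\mathbb H^2\setminus\overline\Omega$ is connected it first concludes (as in the planar case) that $\Omega$ is diffeomorphic to a disk and $\partial\Omega$ is a single curve. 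Then, if $\partial_\infty\Omega$ contained two distinct points, this single boundary curve would accumulate at both of them, and the simply connected region $\Omega$ trapped on one side would contain cones $\mathcal C_z(y,r,s)$ of \emph{every} radius $r$ at one of these asymptotic points; that directly contradicts Theorem~\ref{boundary}. So the paper uses the conical-point part of Theorem~\ref{boundary} (not the dimension estimate) together with the disk topology, which is both shorter and avoids the separation issue you anticipated.
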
 

\subsection*{Acknowledgments}
This work is part of the doctoral thesis of the first author. The first author would like to express gratitude to the Federal University of Alagoas Mathematics Graduate Program for the support and hospitality during their Ph.D. studies. The second author was partially supported by CNPq (309733/2019-7, 201322/2020-0, and 405468/2021-0) and FAPEAL (E:60030.0000000323/2023)

\section{Preliminaries}
In this section, we present some important results that are essential for comprehending the forthcoming proofs.
\subsection{Strong comparison principle}
The first main tool is the following strong comparison theorem due to  Damascelli and Sciunzi,
adapted here to our proposals. This result is used in the proof of Theorem \ref{teo1} in the next section.

\begin{theorem}\cite[Theorem 1.4]{DamascelliSciunzi06}
\label{PCF}
Let $u,v\in C^1(\overline{B})$, where
$B$ is a bounded domain smooth domain in $\mathbb{R}^n$ and  
$\displaystyle\frac{2n+2}{n+2}<p<2$ or $p>2$. Suppose that either $u$ or $v$  is a weak solution of
\begin{equation*} 
\left\{
\begin{array}{rcccl}
	\Delta_p u+ f(u) &=& 0 & \textnormal{in} & B, \\[1mm]
	u &>& 0 & \textnormal{in} & B, \\[1mm]
 u &=& 0 & \textnormal{on} & \partial B .
\end{array}
\right.
\end{equation*}
where $f:[0,\infty)\rightarrow \mathbb{R}$ is
a positive  continuous function, Lipschitz in $(0,\infty)$. If	
 	\begin{equation}\label{PCFeq 2}
		-\Delta_p u-f(u)\leq -\Delta_p v-f(v)\ \ \textrm{and}\ \ u\leq v\ \ \text{in} \ \ B,
	\end{equation}
then $u\equiv v$ in $\Omega$, 
unless $u<v$ in $B$.
\end{theorem}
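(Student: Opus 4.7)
The plan is to reduce the nonlinear comparison to a linear, degenerate (respectively singular) elliptic differential inequality for $w := v - u \geq 0$, and then apply a strong maximum principle for such operators. The two regimes $p>2$ and $\frac{2n+2}{n+2} < p < 2$ will be handled in parallel; the sharp lower bound on $p$ is going to enter only at a single place, namely the integrability of the linearized coefficients along the critical set $Z := \{\nabla u = 0\} \cup \{\nabla v = 0\}$.

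First I would linearize. For $s\in[0,1]$, set $u_s = (1-s)u + sv$, so that $\nabla u_s = \nabla u + s\nabla w$. A direct computation with the fundamental theorem of calculus gives
\[
|\nabla v|^{p-2}\nabla v - |\nabla u|^{p-2}\nabla u \;=\; A(x)\,\nabla w,\qquad A(x) = \int_0^1 |\nabla u_s|^{p-2}\Bigl(\mathrm{Id} + (p-2)\tfrac{\nabla u_s \otimes \nabla u_s}{|\nabla u_s|^2}\Bigr)\,ds.
\]
Since $f$ is Lipschitz on the compact range of $u$ and $v$, one may write $f(v)-f(u) = c(x)\,w$ with $c \in L^\infty(B)$. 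The hypothesis then rewrites, weakly, as the linear inequality
\[
-\,\mathrm{div}\bigl(A(x)\,\nabla w\bigr) \,-\, c(x)\,w \;\geq\; 0, \qquad w\geq 0 \text{ in } B.
\]

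Away from $Z$ the matrix $A$ is uniformly elliptic with eigenvalues comparable to $\min(|\nabla u|,|\nabla v|)^{p-2}$, so the classical linear strong maximum principle applies on each connected component of $B\setminus Z$. The principal obstacle is that $Z$ may be large enough to disconnect $B$, and along it the coefficients of $A$ either degenerate (for $p>2$) or blow up (for $p<2$). To traverse $Z$ I would invoke the inverse-gradient summability estimates proved by Damascelli-Sciunzi: on any $B' \Subset B$, $|\nabla u|^{-1}$ lies in $L^q(B')$ for a suitable $q>0$, and this is the point at which the restriction $p > \tfrac{2n+2}{n+2}$ is used. The estimate places the weight $|\nabla u_s|^{p-2}$ inside the Muckenhoupt $A_2$ class, and a weighted Moser iteration then delivers a weak Harnack inequality for $w$.

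Combining the component-wise strong maximum principle on $B\setminus Z$ with the weighted Harnack across $Z$ should yield the desired global dichotomy: if $w$ vanishes at any interior point of $B$, then, by propagation through $B\setminus Z$ and across $Z$ via the weighted Harnack, $w\equiv 0$ in $B$, i.e.\ $u\equiv v$; otherwise $w>0$ strictly and $u<v$ in $B$. I expect the decisive obstacle to be the inverse-gradient summability in Step~3 — this is the only place where the lower bound on $p$ is genuinely needed, and establishing it requires fine estimates along integral curves of $\nabla u$ balanced against Sobolev embedding; the remainder is a careful but routine adaptation of linear weighted elliptic SMP machinery.
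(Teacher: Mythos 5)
The paper does not prove this statement: it is imported verbatim from \cite[Theorem~1.4]{DamascelliSciunzi06} and used as a black box, so there is no in-paper argument to compare against. Your sketch does, however, reproduce the strategy of the original Damascelli--Sciunzi proof --- linearize the difference of the $p$-Laplacians into $-\mathrm{div}(A(x)\nabla w)\geq c(x)w$ with weight comparable to $(|\nabla u|+|\nabla v|)^{p-2}$, control the degenerate/singular set via local summability of $|\nabla u|^{-1}$ for the solution (this is exactly where $p>\tfrac{2n+2}{n+2}$ enters), and run a weighted Poincar\'e--Moser iteration to get a weak Harnack inequality that propagates the zero set of $w$ across $Z$. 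Two caveats: the decisive inverse-gradient summability is cited rather than proved, so as a self-contained argument the proposal is a correct roadmap rather than a proof; and the claim that the weight lands in the Muckenhoupt $A_2$ class overstates what is actually established in the source, where only a weighted Sobolev--Poincar\'e inequality derived from the summability estimates is needed (and proved).
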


\begin{remark}  By a carefully reading of \cite{DamascelliSciunzi06} we can see that
Theorem \ref{PCF} still holds for bounded domains in 
the hyperbolic space.
\end{remark}

\subsection{A reflection principle for $f$-extremal domains} 
As a byproduct  of the proof of Theorem \ref{teo1}  in Section \ref{Proof_of_theorem1}
and also from previous works such as \cite{BrockHenrot,DamascelliPacella,GarofaloLewis}, it is known that the method of moving planes can be applied to equations involving the $p$-Laplacian operator. Specifically, in his thesis \cite[Teorema 3.4.2]{Gilberto}, the first author provided a proof of the following general symmetry result.
\begin{proposition} \label{reflection}
Given $p>1$, and a Lipschitz function 
$f:[0,\infty)\to \mathbb R$, let $\Omega\subset 
\mathbb R^n$  or $\Omega\subset 
\mathbb H^n$ be 
a $f$-extremal domain for the $p$-Laplacian
and let $P$ be a totally geodesic hyperplane that  intersects $\Omega$. 
If the halfspace $P^+$ determined 
by $P$ has a bounded connected component $C$ 
of $\Omega\setminus P$   and the reflection 
$C'$ of $C$ with respect to $P$ 
becomes internally tangent to the boundary of 
$\Omega$ at some point not on $P$ or $P$
reaches a position where it is orthogonal to the boundary of $\Omega$, then
$\Omega$ is symmetric with respect to $P$.
\end{proposition}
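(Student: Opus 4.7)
The plan is to implement the reflection (moving-plane) argument on the component $C$ and its reflection $C'$. Let $u$ denote the positive $C^{1,\alpha}$ solution associated with the $f$-extremal domain $\Omega$, and let $\sigma$ denote the reflection across the totally geodesic hyperplane $P$, an isometry of the ambient space $\mathbb{R}^n$ or $\mathbb{H}^n$ under which $\Delta_p$ is invariant. Define $v(x) = u(\sigma(x))$ for $x \in C'$; then $v$ is a positive weak solution of $\Delta_p v + f(v) = 0$ in $C'$ with the same Lipschitz nonlinearity.

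First I would establish the inequality $u \geq v$ on $\overline{C'}$. On $\partial C' \cap P$ the reflection $\sigma$ is the identity, so $u = v$ there. The remaining piece $\partial C' \setminus P$ is the $\sigma$-image of $\partial C \cap \partial\Omega \cap P^+$; by the standing hypothesis (internal tangency of $C'$ to $\partial\Omega$, or the terminal orthogonality position of $P$) every such point lies in $\overline{\Omega}$, so $v = 0$ there while $u \geq 0$. Hence $u \geq v$ on $\partial C'$. Combining the weak comparison principle for the $p$-Laplacian with Lipschitz absorption and the strong comparison theorem (Theorem~\ref{PCF}, together with the refinements in Damascelli-Pacella, Brock-Henrot, and Garofalo-Lewis) then produces the dichotomy inside $C'$: either $u \equiv v$, or $u > v$ strictly.

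The crux is to rule out the strict case by a Hopf-type boundary analysis at the distinguished point. In the internal-tangency scenario, let $p \in (\partial\Omega \cap \partial C') \setminus P$. Internal tangency forces the outward unit normal to $\partial C'$ at $p$ to coincide with the outward unit normal $\nu$ of $\partial\Omega$ at $p$. The chain rule for $v = u \circ \sigma$ together with the overdetermined condition $\partial u/\partial\nu \equiv c$ on $\partial\Omega$ yields $\partial v/\partial\nu(p) = \partial u/\partial\nu(\sigma(p)) = c = \partial u/\partial\nu(p)$, so the boundary $1$-jets of $u$ and $v$ match at $p$. Applied to $u - v > 0$ in $C'$, this contradicts the Hopf-type boundary-point lemma for the $p$-Laplacian. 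In the orthogonality scenario, at a point $q \in P \cap \partial\Omega$ where $P \perp \partial\Omega$, the outward normal at $q$ lies in $P$ and is fixed by $\sigma$, so $\nabla u(q) = \nabla v(q)$; the contradiction is then produced by a Serrin-type corner boundary-point lemma adapted to the $p$-Laplacian.

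Therefore $u \equiv v$ in $C'$, i.e., $u$ is $\sigma$-invariant on $C \cup C'$. To promote this local symmetry to the global statement $\sigma(\Omega) = \Omega$, I would run the standard continuation argument: the set $S = \{x \in \Omega : \sigma(x) \in \Omega,\ u(x) = u(\sigma(x))\}$ is nonempty (it contains $C \cup C'$), closed in $\Omega \cap \sigma^{-1}(\Omega)$ by continuity of $u$, and open by applying the strong comparison principle to $u$ and $u \circ \sigma$ on small balls around any point of coincidence; connectedness of $\Omega$ then forces $\sigma(\Omega) = \Omega$. I expect the principal technical obstacle to be carrying out the Hopf and Serrin-corner boundary-point lemmas rigorously in the degenerate quasilinear setting of the $p$-Laplacian, where the classical linear machinery does not apply directly; this is the same mechanism through which the range of admissible $p$ enters the main theorems of the paper, even though the formal statement of the proposition is phrased for all $p > 1$ using the quasilinear versions of those lemmas available in the cited literature.
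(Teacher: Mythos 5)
The paper itself does not prove Proposition~\ref{reflection}; it defers to the first author's thesis and to \cite{BrockHenrot,DamascelliPacella,GarofaloLewis}, so the comparison here is with the standard moving-plane argument that those references carry out. Your overall strategy (reflect $u$ across $P$, compare $u$ with $v=u\circ\sigma$ on $C'$, and close with a Hopf/Serrin-corner lemma at the critical point) is indeed that argument. But the central step as you have written it fails. You claim that $u\geq v$ on $\partial C'$ together with ``the weak comparison principle for the $p$-Laplacian with Lipschitz absorption'' yields $u\geq v$ in $C'$. For the nonlinearities this paper allows, no such comparison principle holds in a domain of arbitrary size: take $f(t)=\lambda t^{p-1}$, let $D=B_{R_\lambda}$, let $u$ be the first eigenfunction and $v=2u$; both solve the same equation, $u=v=0$ on $\partial D$, yet $u<v$ throughout $D$. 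This is precisely why the moving-plane method is a \emph{process}: one starts from positions of $P$ where the cap is narrow (or of small measure), where weak comparison for quasilinear equations does hold, and propagates the inequality $u\geq v$ by a continuity/connectedness argument in the plane parameter up to the critical position described in the hypothesis. Your write-up jumps directly to the critical cap and asserts the ordering there, which is the one thing that cannot be asserted without the sliding argument. The same omission undermines your final globalization step: the openness of the coincidence set $S$ cannot be obtained from the strong comparison principle on small balls, because Theorem~\ref{PCF} requires an a priori ordering $u\leq v$ that you do not have outside $C'$; the passage from symmetry of $C\cup C'$ to symmetry of all of $\Omega$ needs a separate argument (tracking that $\sigma(\partial C\cap\partial\Omega)\subset\partial\Omega$ and continuing the reflection along the boundary).

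Two further points are worth flagging. First, the Hopf and Serrin-corner lemmas you invoke are not available for the $p$-Laplacian in the generality your sketch suggests: they are obtained by linearizing the operator near the boundary where $|\nabla u|$ is bounded away from zero (exactly as in the matrix $(a_{ij})$ construction in the paper's proof of Theorem~\ref{teo1}), and the admissible range of $p$ in Theorem~\ref{PCF} ($\tfrac{2n+2}{n+2}<p<2$ or $p>2$) enters at this stage; you acknowledge the tension with the statement's hypothesis $p>1$ but leave it unresolved, as does the paper. Second, your identification of the tangency point requires $c\neq 0$ for the matching of first-order boundary data to contradict Hopf; this should be stated (it follows from a Hopf-type lemma applied to $u$ itself under the positivity assumptions on $f$). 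In short: right skeleton, correct closing mechanism, but the inequality $u\geq v$ in $C'$ is asserted rather than proved, and that is the heart of the proposition.
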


This proposition is used in the proofs of Theorems \ref{teo2}, \ref{teo3}, \ref{theoremH} and Proposition \ref{properties}.

\subsection{An inverse problem for the first eigenvalue of the $p$-Laplacian}
Using the variational charactherization 
(\ref{lambda_variacoinal}) 
we can easily see that  how $\lambda_{1,p}$ changes
under homotheties. 
In particular, if $B_R\subset \mathbb R^n$ is 
ball of radius $R$,
we have
$$\lambda_{1,p}(B_R)=\dfrac{\lambda_{1,p}(B_1)}{R^p}.$$
Thus, given  $\lambda>0$, there exists a unique  $R_{\lambda}>0$ such that
$\lambda_{1,p}(B_{R_{\lambda}})=\lambda$.
It means that  for each given $\lambda>0$ there exists 
$v\in C^{1,\alpha}(B_{R_\lambda})$ solution to
\begin{equation} \label{dom ext em Rn}
\left\{
\begin{array}{rcccl}
	\Delta_p v+\lambda |v|^{p-2}v &=& 0 & \textnormal{in} & B_{R_{\lambda}}, \\[1mm]
	v &>& 0 & \textnormal{in} & B_{R_{\lambda}}, \\[1mm]
 v &=& 0 & \textnormal{on} & \partial B_{R_{\lambda}}.
\end{array}
\right.
\end{equation}

The same result holds for geodesic balls  in the hyperbolic space  $\mathbb H^n$, for $\lambda$ 
bigger than the bottom of the spectrum of $\Delta_p$. Precisely,
\begin{proposition}\cite[Proposition 2.1]{MR4319403}
\label{Prop-Rlambda}
Given any $\lambda >\frac{(n-1)^p}{p^p}$
there exists unique $R_\lambda>0$
such that first eigenvalue of the  Dirichlet problem for the $p$-Laplacian on the
geodesic ball $B_{R_\lambda}\subset \mathbb H^n$
with radius $R_\lambda$  is precisely $\lambda.$
\end{proposition}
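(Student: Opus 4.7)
The plan is to prove both existence and uniqueness by exhibiting the map $R\mapsto \lambda_{1,p}(B_R)$ on geodesic balls in $\mathbb H^n$ as a continuous, strictly decreasing bijection from $(0,\infty)$ onto $\bigl(\tfrac{(n-1)^p}{p^p},\infty\bigr)$, and then invoking the intermediate value theorem.

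For strict monotonicity, I would use the variational characterization of $\lambda_{1,p}$ on a Riemannian manifold, which is exactly as in \eqref{lambda_variacoinal} with volume and gradient induced by the hyperbolic metric. For $0<R_1<R_2$, any $u\in W^{1,p}_0(B_{R_1})$ extends by zero to $\tilde u\in W^{1,p}_0(B_{R_2})$ with the same Rayleigh quotient, so $\lambda_{1,p}(B_{R_2})\leq \lambda_{1,p}(B_{R_1})$. Strictness follows from simplicity of the principal eigenvalue and the fact that a principal eigenfunction on $B_{R_2}$ must be strictly positive throughout $B_{R_2}$, which rules out the trivial extension from $B_{R_1}$ as a minimizer.

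For continuity and the two limits: in geodesic polar coordinates, the metric on $\mathbb H^n$ is $dr^2+\sinh^2(r)\,g_{\mathbb S^{n-1}}$, so $B_R$ is spherically symmetric and its principal eigenfunction is radial, reducing the problem to a second-order ODE on $[0,R]$; standard continuous dependence on the endpoint $R$ then yields continuity of $R\mapsto \lambda_{1,p}(B_R)$. As $R\to 0^+$, small geodesic balls are arbitrarily close to Euclidean balls (the conformal factor $\sinh(r)/r\to 1$), so by a sandwich comparison with Euclidean balls and the scaling $\lambda_{1,p}(B_R^{\mathrm{Euc}})=\lambda_{1,p}(B_1^{\mathrm{Euc}})/R^p$, we get $\lambda_{1,p}(B_R)\to\infty$. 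As $R\to\infty$, one shows $\lambda_{1,p}(B_R)\downarrow \tfrac{(n-1)^p}{p^p}$: the lower bound is the $p$-Laplacian McKean-type inequality $\int_{\mathbb H^n}\|\nabla u\|^p\,dV\geq \tfrac{(n-1)^p}{p^p}\int_{\mathbb H^n}|u|^p\,dV$ valid for $u\in W^{1,p}_0(\mathbb H^n)$, and the matching upper bound follows from a test-function argument using truncations of the formal extremiser $e^{-\tfrac{n-1}{p}\rho(x)}$, with $\rho$ the distance to a fixed base point, multiplied by a suitable cut-off supported in $B_R$.

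Combining these three ingredients, $R\mapsto \lambda_{1,p}(B_R)$ is a continuous, strictly decreasing homeomorphism of $(0,\infty)$ onto $\bigl(\tfrac{(n-1)^p}{p^p},\infty\bigr)$, and the existence and uniqueness of $R_\lambda$ for each $\lambda>\tfrac{(n-1)^p}{p^p}$ follow from the intermediate value theorem. The main obstacle I expect is the sharp asymptotic $\lambda_{1,p}(B_R)\to\tfrac{(n-1)^p}{p^p}$: establishing the McKean-type Hardy inequality for the $p$-Laplacian in $\mathbb H^n$ is genuinely nonlinear and global, and constructing test functions on $B_R$ that saturate it as $R\to\infty$ must be done carefully to control the $L^p$ normalisation against losses produced by the cut-off.
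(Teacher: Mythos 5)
The paper does not actually prove this proposition: it is imported wholesale from \cite[Proposition 2.1]{MR4319403}, so there is no internal argument to compare yours against. Judged on its own terms, your proof strategy is correct and is the standard one for this kind of statement: domain monotonicity plus simplicity/positivity of the principal eigenfunction gives strict decrease of $R\mapsto\lambda_{1,p}(B_R)$; the small-ball comparison with Euclidean balls and the scaling law $\lambda_{1,p}(B_R^{\mathrm{Euc}})=\lambda_{1,p}(B_1^{\mathrm{Euc}})/R^p$ gives the blow-up as $R\to0^+$; and the identification of $\lim_{R\to\infty}\lambda_{1,p}(B_R)$ with the bottom of the spectrum $\lambda_{1,p}(\mathbb H^n)=\left(\tfrac{n-1}{p}\right)^p$ gives the lower endpoint. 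You correctly flag the only genuinely delicate ingredient, the McKean-type bound; for completeness note that its proof is short: integrating $|u|^p\,\Delta\rho$ by parts with $\Delta\rho=(n-1)\coth\rho\geq n-1$ and applying H\"older yields $\left(\tfrac{n-1}{p}\right)^p\int|u|^p\leq\int\|\nabla u\\|^p$, and the test functions $e^{-\frac{n-1}{p}\rho}$ cut off in $B_R$ saturate it because both $\int|u|^p$ and $\int\|\nabla u\|^p$ grow linearly in $R$ while the cut-off error stays bounded. Two smaller points worth making explicit in a full write-up: (i) strictness of the monotonicity needs the fact that a minimizer of the Rayleigh quotient \eqref{lambda_variacoinal} on $B_{R_2}$ is an eigenfunction and hence strictly positive by the Harnack inequality for the $p$-Laplacian, which is what rules out the zero-extension; and (ii) continuity of $R\mapsto\lambda_{1,p}(B_R)$ from the right (outer approximation) is not automatic from the variational characterization alone, so your reduction to the radial ODE, justified by simplicity plus rotational invariance of $B_R$, is the cleanest way to get two-sided continuity.
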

In particular, there exists a positive function
$v\in C^{1,\alpha}(B_{R_\lambda})$,
$B_{R_\lambda}\subset \mathbb H^n$, solution
to problem (\ref{dom ext em Rn}) in the hyperbolic space.


\section{Extremal domains in the Euclidean space}\label{Proof_of_theorem1}

\subsection{Narrowness properties of extremal domains}
In this subsection we present a proof of Theorem \ref{teo1} that overcomes the challenges arising from the non-linearity of the $p$-Laplacian operator.

\begin{proof}
As a first step, let us prove that 
there is no closed ball of radius $R_\lambda$ 
(or bigger)
contained in $\Omega$.  
Assume by contradiction that 
$\overline B_{R_\lambda}\subset \Omega$ and let 
$v\in C^{1,\alpha}(B_{R_\lambda})$ be the positive solution to 
\begin{equation}\label{v}
\left\{
\begin{array}{rll}
\Delta_p v+\lambda v^{p-1} &=& 0 \ \ \textrm{in} \ \  B_{R_{\lambda}}, \\
v &=& 0 \ \ \textrm{on} \ \ \partial B_{R_{\lambda}}
	\end{array}
	\right. 
	\end{equation}
normalized such that $\|v\|_{L^p(B_{R_\lambda})}=1$. 
 Given $\epsilon>0$, we note that
  $v_{\epsilon}=\epsilon v$ is also a positive
solution to (\ref{v}) and,
since $u> 0$ in 
$\overline B_{R_{\lambda}}$, we can take
$\epsilon>0$  such that 
$v_{\epsilon}(x)\leq u(x)$, for 
$x\in   \overline B_{R_{\lambda}}$
and $v_{\epsilon}(x_0)=u(x_0)$ for some
$ x_0 \in B_{R_{\lambda}}$.

Since $\Delta_p u+f(u)=0 $ in $\Omega$ and 
$f(u)\geq \lambda u^{p-1}$, we get
$$
-\Delta_p v_{\epsilon}-\lambda v_{\epsilon}^{p-1}\leq -\Delta_p u-\lambda u^{p-1},
$$
and from Theorem  \ref{PCF}, 
we have  $v_{\epsilon}\equiv u$
in $B_{R_{\lambda}}$, which is a contradiction.

\smallskip

Assuming that 
$\overline{B}_{R_\lambda}\subset\overline{\Omega}$, there must exist a point 
$y_0\in\partial\Omega\cap 
\partial\overline B_{R_{\lambda}}$. Let $v$ be the solution to problem (\ref{v}). As shown in \cite{bhathacharya88}, $v$ is radial, implying that $\frac{\partial v}{\partial \nu}$ is constant along $\partial B_{R_\lambda}$. Thus, we can find $\epsilon>0$ such that:

(1) Either $v_\epsilon(x)\leq u(x)$ in $B_{R_\lambda}$ and $v_\epsilon(x_0)=u(x_0)$ for some $x_0\in B_{R_\lambda}$.

\smallskip

(2) Or $v_\epsilon(x)<u(x)$ in $B_{R_\lambda}$ and
\begin{equation}\label{bordo}
\frac{\partial u}{\partial \nu}(y_0)=\frac{\partial v_\epsilon}{\delta \nu}(y_0)=c.
\end{equation}

The first case leads to a contradiction, as shown previously. Therefore, let us assume that assertion (2) holds and proceed to derive a contradiction using ideas similar to those in \cite{anisatoledo}.

Considering the equations satisfied by $u$ and $v_\epsilon$ within $B_{R_\lambda}$, along with the given condition on $f$, we obtain the following inequality:
\begin{equation*}
\begin{array}{rll}
-\textrm{div}\, \left(|\nabla u|^{p-2}\nabla u-|\nabla v_{\epsilon}|^{p-2}\nabla v_{\epsilon}\right) 
&=& -\Delta_p u+\Delta_p v_{\epsilon}\\
&=& f(u)-\lambda v_{\epsilon}^{p-1}\\
&\geq &\lambda u^{p-1}-\lambda v_{\epsilon}^{p-1}\\
&=& \lambda \left(u^{p-1}-v_{\epsilon}^{p-1}\right)\\
&\geq & 0.
\end{array}
\end{equation*}

That is,
\begin{equation}\label{DivA}
-\textrm{div}\, \left(A(\nabla u)-A(\nabla v_{\epsilon})\right)\geq 0, 
	\end{equation}
where $A(\xi)=|\xi|^{p-2}\xi$. 
Setting $A_i(\xi)=|\xi|^{p-2}\xi_i$, $i=1,\cdots,n$ and using the mean value theorem, we get
\begin{equation*}
\begin{array}{rll}
A(\nabla u)-A(\nabla v_{\epsilon}) &=& A(t\nabla u+(1-t)\nabla v_{\epsilon})\big |_{0}^{1} \\[1mm]
&=& \int_{0}^{1}\frac{d}{dt}A(t\nabla u+(1-t)\nabla v_{\epsilon})dt \\[1mm]
&=&\int_{0}^{1}\left(\alpha_1,\ldots,\alpha_n\right)dt \\[1mm]
&=&\big(\int_{0}^{1}\alpha_1dt,\ldots,\int_{0}^{1}\alpha_ndt\big),
\end{array}
\end{equation*}
where $w=u-v_{\epsilon}$ and
 $\alpha_i=\langle \nabla A_i,\nabla w\rangle=\displaystyle\sum_{j=1}^n\frac{\partial A_i}{\partial x_j}\frac{\partial w}{\partial x_j}$.
Thus,
$$
\textrm{div}\, \big(A(\nabla u)-A(\nabla v_{\delta})\big)=\sum_{i=1}^{n}\frac{\partial}{\partial x_i}\bigg\{\sum_{j=1}^{n}\bigg[\int_{0}^{1}\frac{\partial A_i}{\partial x_j}\big(t\nabla u(x)+(1-t)\nabla v_{\epsilon}(x)\big)dt\bigg]\frac{\partial w}{\partial x_j}\bigg\}.
$$

Noting that $A(\xi)=\nabla\Gamma (\xi)$, where	 $\Gamma(\xi)=\frac{|\xi|^p}{p}$, we can 
rephrase inequality (\ref{DivA}) as follows:
$$
-\sum_{i=1}^{n}\frac{\partial}{\partial x_i}\Big(\sum_{j=1}^{n}a_{ij}(x)\frac{\partial w}{\partial x_j}\Big)\geq 0,
$$
where	
$$
a_{ij}(x)=\int_{0}^{1}D_{ij}\Gamma\big(t\nabla u(x)+(1-t)\nabla v_{\epsilon}(x)\big)dt.
$$

We assert that there exists a neighborhood
$V\subset \overline B_{R_\lambda}$ of $y_0$, 
where the matrix $(a_{ij}(x))$
is uniformly positive definite. 

Let us define  
$g:[0,1]\times \overline{\Omega}\rightarrow\mathbb{R}$, 
as $g(t,x)=t\nabla u(x)+(1-t)\nabla v_{\epsilon}(x)$.
It follows that
\[
D_{ij}\Gamma(g(t,x))\geq K|g(t,x)|^{p-2},
\]
where $K=\min\{p-1,1\}$ (see \cite{anisatoledo}).

Since $v_\epsilon$ is a first eigenfunction
to the $p$-Laplacian operator in $B_{R_\lambda}$ it
is regular in a neighbourhood of the boundary.
More precisely (see \cite[Theorem 3.1]{barles88}), 
there are $\beta>0$, $\delta>0$ and $m>0$ such that
$v_\epsilon\in C^{2,\beta}(N^\delta (B_{R_\lambda}))$
and 
$|\nabla v_\epsilon|>m$ in $N^\delta (B_{R_\lambda})$,
where
$$N^\delta (B_{R_\lambda})=\{x\in B_{R_{\lambda}}
: dist(x,\partial B_{R_{\lambda}})<\delta\}.
$$

Note that $g$ in continuous in $[0,1]\times\Omega_\delta$ and
$|g(0,x)|=|\nabla v_{\epsilon}(x)|>m>0.$
In particular, there are $t_0>0$ and a neighbourhood $V$
of $y_0$ in $N^\delta (B_{R_\lambda})$ such that
$|g(t,x)|\geq c_0,$ for all $(t,x)\in [0,t_0]\times V$,
for some positive constant $c_0$.  It implies that 
\[
a_{ij}(x)\geq Kc_0^{p-2}t_0,
\]
for all $i,j\in \{1,\ldots,n\}$ and
for all $x\in V$, as we claimed.

So, we conclude that $w=u-v_\epsilon$ is a subsolution
to $Lw\leq 0$, where 
\begin{equation}\label{operador L}
L:=\displaystyle\sum_{i,j=1}^{n}\frac{\partial}{\partial x_i}\bigg(a_{ij}\frac{\partial}{\partial x_j}\bigg),
\end{equation}
is a uniformly elliptic linear operator. 

Since $w>0$ in $V$ and $w(y_0)=0$, applying the strong maximum principle yields
$$
\frac{\partial w}{\partial \nu}(y_0)<0
$$
which contradicts inequality (\ref{bordo}).
\end{proof}

As a consequence of this proof, we obtain a general symmetry result for $p$-extremal domains, as stated in Proposition \ref{reflection}. Once this symmetry result is established, we can follow the same steps as those taken by Ros and Sicbaldi in \cite[Section 4]{RosSicbaldi13} in order to derive important geometrical properties of $p$-extremal domains. These properties are summarized in the following proposition, which is fundamental for establishing a height estimate for planar domains in the next subsection.

\begin{proposition}\label{properties}
Let $\Omega\subset \mathbb R^n$ be an unbounded
$f$-extremal domain for the $p$-Laplacian
with $f(t)\geq \lambda t^{p-1}$,
and let $P$ be a totally geodesic hyperplane that does 
intersect $\Omega$. If the halfspace $P^+$ determined 
by $P$ has a bounded connected component $C$ 
of $\Omega\setminus P$, then 
\begin{enumerate}
    \item \label{graph} $\partial C\cap P^+$ is connected and it is
    graph over $\partial C\cap P$;
    \item \label{orthogonal} $\partial C\cap P^+$ is not orthogonal to
    $P$;
    \item If $C'$ denotes the reflection of $C$ with
    respect to $P$, then $\overline{C\cup C'}\subset
    \overline{\Omega}.$
\end{enumerate}
In particular, 
it is not possible to construct a half-ball of radius
$R_\lambda$ inside $C$ having its base on 
$\partial C\cap P$.
\end{proposition}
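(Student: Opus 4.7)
The plan is to run the method of moving planes on the bounded component $C$ using hyperplanes parallel to $P$, sliding from infinity inside $P^+$ back toward $P$, and to use Proposition \ref{reflection} to force the critical plane to coincide with $P$ itself. This is the same scheme carried out by Ros and Sicbaldi in \cite[Section~4]{RosSicbaldi13} for $p=2$; it transposes to the $p$-Laplacian setting because Proposition \ref{reflection}---which rests on the strong comparison principle (Theorem \ref{PCF})---provides the symmetry conclusion used there.

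Concretely, fix a unit normal $\nu$ to $P$ pointing into $P^+$, set $P_t := P + t\nu$, and let $d(x) := \langle x - y_0, \nu \rangle$ for some $y_0 \in P$. Since $C$ is bounded, pick $T > 0$ with $C \subset \{0 < d < T\}$; for $t \in (0,T)$ denote the cap by $C_t := C \cap \{d > t\}$ and its reflection across $P_t$ by $C_t^*$. Let $t_c$ be the infimum of $t \in [0,T]$ such that $C_s^* \subset C$ and $P_s$ meets $\partial C$ transversally for every $s \in (t,T)$.

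The central step is to prove $t_c = 0$. If instead $t_c > 0$, the definition of $t_c$ forces either an interior tangency of $\overline{C_{t_c}^*}$ with $\partial C$ at a point off $P_{t_c}$, or orthogonality of $P_{t_c}$ to $\partial C$ at some point; Proposition \ref{reflection} then makes $\Omega$ symmetric about $P_{t_c}$. Since $P$ meets $\Omega$, the domain $\Omega$ contains an open ball centered on $P$, whose reflection across $P_{t_c}$ is an open subset of $\Omega$ at $d > 2t_c$; the chain of moving-plane reflections, which land inside $C$, then forces this subset into $C$, and $C$ cannot simultaneously remain bounded and respect $C \subset P^+$. Hence $t_c = 0$, and one reaches $P$ itself.

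The three conclusions now follow from the classical consequences of the moving planes reaching $P$. For (1), the strict inclusion $C_t^* \subset C$ for every $t \in (0,T)$, supplied by the strong comparison principle applied to $u$ and its reflection across $P_t$, prevents two distinct points of $\partial C \cap P^+$ from having the same orthogonal projection on $P$, giving the graph property; connectedness is inherited from $C$. For (2), orthogonality of $\partial C \cap P^+$ to $P$ at a limit point would force the graph in (1) to develop a vertical tangent, incompatible with the moving-plane success for all $t > 0$. For (3), passing to $t \to 0^+$ in $C_t^* \subset C$ shows that the full reflection $C'$ of $C$ across $P$ lies in $\overline{\Omega}$. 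Finally, a closed half-ball of radius $R_\lambda$ inside $C$ with base on $\partial C \cap P$ would, via (3), combine with its reflection to produce a closed ball of radius $R_\lambda$ in $\overline{\Omega}$, contradicting Theorem \ref{teo1} as $\Omega$ is unbounded. The principal obstacle is making the $t_c > 0$ contradiction rigorous, by carefully tracking how the global symmetry of $\Omega$ about $P_{t_c}$ interacts with the specific status of $C$ as a bounded component of $\Omega \setminus P$.
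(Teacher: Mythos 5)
Your overall strategy is exactly the one the paper intends: the paper gives no detailed argument for this proposition, deferring to Section~4 of Ros--Sicbaldi once Proposition \ref{reflection} is available, and the moving-plane scheme you describe, together with the use of Theorem \ref{teo1} for the final half-ball statement, is the right skeleton. However, the one step you actually try to argue --- the contradiction when $t_c>0$ --- is both the crux of the whole proof and not correct as written. First, a small but consequential slip in the setup: the moving-plane condition must be $C_s^*\subset\Omega$, not $C_s^*\subset C$; once $s$ drops below half the maximal height of $C$ the reflected cap necessarily protrudes below $P$, so it cannot lie in $C\subset P^+$, and with your definition $t_c$ would be bounded away from $0$ for trivial reasons. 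Second, and more seriously, your mechanism for ruling out $t_c>0$ --- reflecting a ball of $\Omega$ centered on $P$ across $P_{t_c}$ and claiming that ``the chain of moving-plane reflections forces this subset into $C$'' --- does not work: that ball need not be anywhere near $C$, its reflection lands at height about $2t_c$, which can be well below the top of $C$, and nothing in the moving-plane process transports it into $C$. You flag this yourself as the ``principal obstacle,'' which is an accurate self-assessment: the argument is incomplete precisely where it matters.

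The missing ingredient is the interplay between the symmetry of $\Omega$ about $P_{t_c}$ and the hypotheses that $\Omega$ is connected and unbounded. If $\Omega$ is symmetric about $P_{t_c}$ with $t_c>0$, pick a connected component $D$ of $C\cap\{d>t_c\}$; any component of $\Omega\cap\{d>t_c\}$ contained in $C$ is a component of $\Omega\cap\{d>t_c\}$, so the set $W=D\cup R_{t_c}(D)\cup\bigl(\overline D\cap P_{t_c}\cap\Omega\bigr)$ is open and closed in $\Omega$ (openness at points of $P_{t_c}$ uses that a small ball there meets $D$ on one side, hence is absorbed by $D$, and by symmetry by $R_{t_c}(D)$ on the other). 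Connectedness forces $W=\Omega$, but $W$ is bounded because $D\subset C$ is, contradicting the unboundedness of $\Omega$. The same argument, run at $t=0$, is what rules out orthogonality in item (2); your ``vertical tangent'' reasoning does not address the Serrin corner case on $P$ itself. With these repairs the rest of your outline (the graph property from the monotonicity of the reflections, item (3) by letting $t\to0^+$ in $C_t^*\subset\Omega$, and the half-ball statement from Theorem \ref{teo1}) goes through.
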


This last statement follows from our Theorem \ref{teo1}.

\subsection{Geometry of planar extremal domains}\label{planar} 

In the particular case of planar domains, we can
explore the low dimension setting to obtain stronger properties of $f$-extremal domains.

Along this subsection, 
$\Omega\subset\mathbb R^2$ 
will denote  an unbounded connect
planar domain. 
Given a straight line 
$L\subset \mathbb R^2$ 
we denote by $L^+$ and  $L^-$ 
the halfplanes determined by $L$ in  
$\mathbb{R}^2$.

\begin{lemma}\label{height} Assume that
$\Omega\subset\mathbb R^2$ is a
$f$-extremal domain with 
$f(t)\geq \lambda t^{p-1}$, for some
$3/2<p<\infty$.
Let $C$ be a bounded connected component of 
$\Omega\cap L^+$ and let  
$h(C)$ be the maximum distance  
between  $\partial C$ and $L$. 
Then  $h(C)\leq 3R_{\lambda}$.
\end{lemma}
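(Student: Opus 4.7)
The plan is a proof by contradiction: assume $h(C) > 3R_{\lambda}$ and construct a closed disk of radius $R_{\lambda}$ inside $\overline{\Omega}$, contradicting Theorem~\ref{teo1}. Observe first that $\Omega$ cannot itself be a ball of radius $R_\lambda$, since such a ball has diameter $2R_\lambda$, strictly less than $h(C)$; therefore Theorem~\ref{teo1} applies in its no-ball-in-$\overline{\Omega}$ form.

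First, I would fix coordinates so that $L = \{y = 0\}$, $L^+ = \{y > 0\}$ and $C \subset \{y \geq 0\}$. By Proposition~\ref{properties}(1), $\partial C$ decomposes as a segment $I \subset L$ together with a smooth graph $\gamma$ over $I$ vanishing at the endpoints of $I$ and attaining its maximum $h = h(C)$ at a point $p^* = (x_0, h)$. For each $t \in (0, h)$, $L_t = \{y = t\}$ meets $C$, and the bounded component $\Sigma_t$ of $\Omega \cap L_t^+$ containing $p^*$ is itself a cap over a subinterval $I_t \subset L_t$; Proposition~\ref{properties}(3) then gives that the reflection $\Sigma_t'$ across $L_t$ lies in $\overline{\Omega}$, so that the doubled lens $\Sigma_t \cup \Sigma_t' \subset \overline{\Omega}$ for every such $t$.

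Next, I would exploit this reflection at two natural scales. Taking $t = 0$ directly produces the full lens $D = C \cup C' \subset \overline{\Omega}$, symmetric about $L$, with vertical extent $2h > 6R_\lambda$. Taking $t = h - 2R_\lambda$, which is strictly positive since $h > 3R_\lambda > 2R_\lambda$, produces a smaller symmetric region of vertical extent $4R_\lambda$ centered at $L_{h - 2R_\lambda}$. By Theorem~\ref{teo1}, neither of these symmetric regions can contain a closed disk of radius $R_\lambda$, and the final statement of Proposition~\ref{properties} gives the sharper local constraint that no half-disk of radius $R_\lambda$ based on $L_t$ fits inside $\Sigma_t$ for any $t \in [0, h)$. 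These constraints translate into explicit upper bounds on the widths $|I_0|$ and $|I_{h - 2R_\lambda}|$, and more generally restrict how steeply the graph $\gamma$ can ascend from $L$ up to the peak $p^*$.

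The main obstacle will be assembling these width bounds, together with the smoothness of $\gamma$ and the non-orthogonality provided by Proposition~\ref{properties}(2) at the endpoints of $I$, to locate an intermediate height $t^* \in (0, h - 2R_\lambda)$ at which the doubled cap $\Sigma_{t^*} \cup \Sigma_{t^*}'$ becomes both wide enough and tall enough to inscribe a closed disk of radius $R_\lambda$. The threshold $3R_\lambda$ is precisely the critical value: once $h > 3R_\lambda$, the top $R_\lambda$-slice of $C$ (from $L_{h-R_\lambda}$ to $p^*$), the bottom $R_\lambda$-slice (from $L$ to $L_{R_\lambda}$), and a middle stretch of height at least $R_\lambda$ between them must each satisfy a no-half-disk condition based on a different reference line, and these conditions become mutually incompatible with the continuity of $\gamma$, which by an intermediate-value argument on the widths $|I_t|$ produces an inscribed $R_\lambda$-disk inside $\overline{\Omega}$ and delivers the desired contradiction.
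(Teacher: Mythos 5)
There is a genuine gap, and it sits exactly where you flag ``the main obstacle'': the intermediate-value argument on the widths $|I_t|$ that is supposed to produce an inscribed disk of radius $R_\lambda$ cannot be carried out. Every constraint you extract from Theorem~\ref{teo1} and Proposition~\ref{properties} --- no $R_\lambda$-ball in $\overline{\Omega}$, no half-disk of radius $R_\lambda$ based on $L_t$ inside $\Sigma_t$, the doubled lenses $\Sigma_t\cup\Sigma_t'$ lying in $\overline{\Omega}$ --- is a \emph{narrowness} (upper-bound) constraint, whereas inscribing an $R_\lambda$-disk requires a \emph{lower} bound on the width of the caps at some height, and nothing in your list supplies one. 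Concretely, a cap of height $100R_\lambda$ contained in a vertical strip of width $\varepsilon$ is consistent with everything you invoke (it contains no $R_\lambda$-disk, no $R_\lambda$-half-disk based on any horizontal line, and all its horizontal reflections are equally thin), yet its doubled lens contains no $R_\lambda$-disk at any intermediate height. So the contradiction with Theorem~\ref{teo1} is unreachable by reflections across the horizontal lines $L_t$ alone; the non-orthogonality in assertion~(\ref{orthogonal}) of Proposition~\ref{properties} does not rescue the argument, since it is an open condition and yields no quantitative angle, hence no quantitative width.

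The missing idea --- and the one the paper actually uses, following Ros--Sicbaldi's Lemma~6.1 --- is to move \emph{tilted} lines: one rotates a line $L_*$ about the endpoint $(a,0)$ of the base interval and runs the Alexandrov reflection in that one-parameter family of directions. The hypothesis $h(C)>3R_\lambda$ is precisely what forces this rotation to reach a position where $L_*$ meets $\partial C$ orthogonally, and the contradiction is then with assertion~(\ref{orthogonal}) of Proposition~\ref{properties} (non-orthogonality of $\partial C^*\cap L_*^+$ to $L_*$), not with the no-ball statement of Theorem~\ref{teo1}. To repair your proof you would have to leave the family of lines parallel to $L$ and show that the tall thin configuration permitted by your current constraints is excluded by an orthogonal first contact for some rotated line.
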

\begin{proof} 
By employing an isometry, we can assume that 
\[L=\{(x,y)\in \mathbb{R}^2: y=0\}\]
and
\[L^+=\{(x,y)\in\mathbb{R}^2: y>0\}.\]

By utilizing assertion (\ref{graph}) stated in Proposition \ref{properties}, we can establish the existence of a smooth function 
\[g:[a,b]\subset L\rightarrow \mathbb{R}\]
satisfying $g(a) = 0 = g(b)$, and $g(x) > 0$ for all $x\in (a,b)$, such that the closure of $\partial C\cap L^+$ corresponds to the graph of $g$ over the interval $[a,b]$. In other words, we have
\[\overline{\partial C\cap L^+}=\{\left(x,g(x)\right): a\leq x\leq b)\}.\]

For the sake of simplicity and without loss of generality, we can assume that $0\in[a,b]$ and $g(0)=h(C)$ is the maximum height.

Assuming, by contradiction, that 
$h(C) > 3R_\lambda$, 
we can follow a similar argument as presented by Ros-Sicbaldi in \cite[Lemma 6.1]{RosSicbaldi13} to identify a new straight line denoted as $L_*$ that intersects $\partial C$ orthogonally and passes through the point $(a,0)$. Since we are working in a two-dimensional space, this straight line defines a subset $C^* = \Omega\cap L_*^+$ in such a way that $\partial C^*\cap L_*^+$ is orthogonal to $L_*$. However, this situation contradicts the assertion (\ref{orthogonal}) in Proposition \ref{properties}.
\end{proof}

Now, let us recall that a domain $\Omega$ in a complete metric space $M$ is classified as a \emph{finite topology domain} if any of the following conditions hold:

\begin{enumerate}
  \item The closure $\overline{\Omega}$ is compact.
  \item $\Omega$ is the complement of a compact set.
  \item There exists a compact set $K\subset M$ such that $\Omega\setminus K$ ia a finite union of disjoint sets, each of which is homeomorphic to a half-cylinder $B\times [0,\infty)$, where $B$ represents a metric ball in $M$.
\end{enumerate}

In the latter case, each individual component of $\Omega\setminus K$ is referred to as an \emph{end} of $\Omega$.

Using this terminology, we can now employ Propositions \ref{reflection} and \ref{properties}, along with Lemma \ref{height} to obtain a proof of Theorem \ref{teo3}, just following  the same steps as in \cite{RosSicbaldi13}.

Finally, let us assume that $\Omega\subset\mathbb{R}^2$ is a domain (thus connected) such that $\mathbb{R}^2\setminus \overline{\Omega}$ is connected. Then, we can deduce that $\Omega$ is diffeomorphic to a disc, and its boundary consists of a single connected curve. In particular, it is either a bounded domain or a domain with only one end. Assuming that $\Omega$ is an $f$-extremal domain, with $f(t)\geq \lambda t^{p-1}$ for some $p>3/2$, we can conclude from Theorem \ref{teo3} that $\Omega$ is bounded. In this case, we can apply Proposition \ref{reflection} to conclude that $\Omega$ is a disk. This establishes the proof of Theorem \ref{teo2}.
\section{Extremal domains in the hyperbolic space}\label{hyperbolic}

Let $\mathbb{H}^n$ be the hyperbolic space of curvature $-1$. 
In this section, we will identify $\mathbb{H}^n$ with the ball model $(\mathbb{B},g_{-1})$, equipped with the Poincaré metric $g_{-1}=\dfrac{4}{(1-|x|^2)^2}g_0$, where $g_0$ represents the Euclidean metric. 

As mentioned in the Introduction, given a smooth function $u$ defined on a domain $\Omega\subset\mathbb H ^n$ we can define $\Delta_p u =\text{div}(\|\nabla u\|^{p-2}\nabla u)$, just considering $\text{div}$, $\nabla$, and the norm with respect to the metric $g_{-1}$.

The first basic observation is that given any point $x\in \mathbb{H}^n$ and any direction $\eta\in T_x\mathbb{H}^n$, there exists a totally geodesic hyperplane $P$ passing through $x$ and orthogonal to $\eta$. Moreover, there is a 1-parameter family of totally geodesic hyperplanes containing $P$ that foliates $\mathbb{H}^n$, and the reflection with respect to $P$ is an isometry of $\mathbb{H}^n$. It is also well-known that the $p$-Laplacian operator remains invariant under isometries (see, for instance, \cite{Gilberto}). More precisely, we have the following lemma:

\begin{lemma}\label{lapla inv}
    Let $M$ be a Riemannian manifold, and $\Phi:M\rightarrow M$ be an isometry. If $\tilde{u}=u\circ \Phi^{-1}$ in $\Phi(\Omega)=\widetilde{\Omega}$, then:
    \begin{enumerate}
        \item $(\Delta_p u)\circ \Phi^{-1}=\Delta_p(u\circ\Phi^{-1})$, i.e., the $p$-Laplacian is invariant under isometries.
        \item If $u$ satisfies the equation $\Delta_p u+f(u)=0$ in $\Omega$, then $\tilde{u}$ satisfies the same equation in $\widetilde{\Omega}$, i.e., $\Delta_p \tilde{u}+f(\tilde{u})=0$ in $\tilde{\Omega}$.
    \end{enumerate}
\end{lemma}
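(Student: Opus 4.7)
The plan is to prove part (1) by tracking how the gradient, the norm, and the divergence individually transform under an isometry $\Phi$, and then to deduce part (2) immediately by composing with $\Phi^{-1}$.

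For the transformation of the gradient, I would fix $x \in \Omega$ and set $y = \Phi(x) \in \widetilde{\Omega}$. Since $\tilde u = u \circ \Phi^{-1}$, the chain rule gives $d\tilde u_y = du_x \circ (d\Phi_x)^{-1}$. Applied to an arbitrary tangent vector $v \in T_y M$ and combined with the defining property of the gradient, this yields
\[
\langle \nabla \tilde u(y), v\rangle_y \;=\; du_x\bigl((d\Phi_x)^{-1}(v)\bigr) \;=\; \langle \nabla u(x), (d\Phi_x)^{-1}(v)\rangle_x \;=\; \langle d\Phi_x(\nabla u(x)), v\rangle_y,
\]
where in the last step I use that $\Phi$ is an isometry, so $d\Phi_x$ preserves the inner product. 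Hence $\nabla \tilde u(y) = d\Phi_x(\nabla u(x))$, and in particular $\|\nabla \tilde u(y)\| = \|\nabla u(x)\|$. Consequently the vector field $X := \|\nabla u\|^{p-2}\nabla u$ on $\Omega$ is pushed forward by $\Phi$ to $\Phi_* X = \|\nabla \tilde u\|^{p-2}\nabla \tilde u$ on $\widetilde{\Omega}$.

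Next I would use the standard fact that isometries preserve the Riemannian volume form $dV_g$, and therefore commute with divergence in the sense that $\mathrm{div}(\Phi_* X) \circ \Phi = \mathrm{div}(X)$. This follows from either the intrinsic definition $\mathcal L_X dV_g = \mathrm{div}(X)\,dV_g$ together with $\Phi^* dV_g = dV_g$, or from the local coordinate formula $\mathrm{div}(X) = \tfrac{1}{\sqrt{\det g}}\partial_i(\sqrt{\det g}\,X^i)$ evaluated in normal coordinates at $x$ and $y$. Combining the two transformation properties gives
\[
\Delta_p \tilde u(y) \;=\; \mathrm{div}(\Phi_* X)(y) \;=\; \mathrm{div}(X)(x) \;=\; \Delta_p u(x),
\]
which is precisely the claim $(\Delta_p u)\circ \Phi^{-1} = \Delta_p(u\circ \Phi^{-1})$ in part (1).

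Part (2) is then immediate: if $\Delta_p u + f(u) = 0$ on $\Omega$, composing with $\Phi^{-1}$ and applying part (1) yields
\[
\Delta_p \tilde u + f(\tilde u) \;=\; (\Delta_p u)\circ \Phi^{-1} + f(u \circ \Phi^{-1}) \;=\; \bigl(\Delta_p u + f(u)\bigr)\circ \Phi^{-1} \;=\; 0 \quad \text{on } \widetilde{\Omega}.
\]
There is no serious obstacle here; the only care needed is with the direction of the pushforward-versus-pullback conventions, and with the (standard) fact that the Riemannian divergence commutes with isometries. One could also argue more concisely by noting that $\Delta_p$ is the Euler–Lagrange operator of the $p$-Dirichlet functional $u \mapsto \tfrac{1}{p}\int \|\nabla u\|^p\,dV_g$, which is manifestly invariant under isometries, but the direct computation above is the cleanest self-contained argument.
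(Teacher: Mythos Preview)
Your argument is correct: the chain-rule computation for the gradient, the norm preservation, the identification of $\Phi_*X$ with $\|\nabla\tilde u\|^{p-2}\nabla\tilde u$, and the invariance of divergence under isometries are all standard and assembled correctly, and part (2) follows at once. The paper itself does not prove this lemma---it is stated as well known with a reference to \cite{Gilberto}---so your proposal simply supplies the routine computation the paper omits.
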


All of this together implies that we can perform the moving planes method in hyperbolic space, as we have already seen in Proposition \ref{reflection} (see also \cite{prajapatkumaresan98}). In particular, taking into account Proposition \ref{Prop-Rlambda}, the proof of Theorem \ref{theoremH} follows in a manner similar to the proof of Theorem \ref{teo1}.

Our second fundamental observation pertains to the geometry of the asymptotic boundary of domains in hyperbolic space. Given two points $y$ and $z$ in the asymptotic boundary  $\partial_\infty \mathbb H^n = \mathbb S^{n-1}$, we denote by  $\gamma$ as the unique geodesic in $\mathbb H^n$ that connects $y$ and $z$. Specifically, $\gamma(+\infty) = y$ and $\gamma(-\infty) = z$. 

Now, for given numbers $r > 0$ and $s \in \mathbb R$, we introduce the concept of the \emph{cone at infinity based at $z$} as follows:
\[
\mathcal{C}_z(y,r,s) = \{x \in \mathbb H^n : d(x, \gamma(t)) \leq r, \text{ for all } t \geq s\}.
\]

Next, consider a domain $\Omega \subset \mathbb{H}^n$ such that $\partial_\infty \Omega \neq \emptyset$. Using the notation defined above, we define $z \in \partial_\infty \Omega$ as a \emph{conical point of radius $r$} if there exist $y \in \partial_\infty \mathbb{H}^n$ and $s \in \mathbb{R}$ such that $\mathcal{C}_z(y,r,s) \subset \Omega$. Notably, if $\Omega$ is a horoball with its boundary at infinity being $z$, then $z$ is considered a conical point for $\Omega$ for all radii $r > 0$.

By using this concept, the proof of Theorem \ref{boundary} can be established following the same ideas as in \cite[Theorem 2.7]{espinarmao18} and will be omitted.

To prove Theorem \ref{teo6}, we observe, as in the conclusion of Subsection \ref{planar}, that if $\mathbb{H}^2\setminus\overline{\Omega}$ is connected, then $\Omega$ is diffeomorphic to a disk, and its boundary, $\partial \Omega$, forms a single curve, since this is a topological nature property. 
We then conclude that either $\Omega$ is bounded (that is, $\partial_\infty\Omega=\emptyset$) and then a ball, or $\partial_\infty\Omega$ has only one point. In fact, other wise, we would have conical points of any radius at the asymptotic boundary of $\Omega$, which contradicts Theorem \ref{boundary}.
On the other hand, if $\partial_\infty\Omega$ has only one point, then the next theorem shows that $\Omega$ is a horoball, which also contradicts Theorem \ref{boundary}. 

This symmetric result for domains with only one point at infinity has its own interested and can be compare with Theorem 3.3 of \cite{espinarmao18} and Theorem 3.6 of \cite{EspinarFarinaMazet}, within the context of overdetermined problems, as well as to \cite{dCL} in the context of constant mean curvature hypersurfaces in hyperbolic space. 

\begin{theorem}\label{horoball}
    Let $\Omega\subset \mathbb H^2$ be a $f$-extremal domain for the $p$-Laplacian operator, and assume that $\partial_\infty\Omega$ consists of a single point. Then $\Omega$ is a horoball. 
\end{theorem}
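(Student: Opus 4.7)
The plan is to work in the upper half-plane model of $\mathbb{H}^2$ with $z$ placed at infinity, so that the stabilizer of $z$ in $\mathrm{Isom}(\mathbb{H}^2)$ contains the horizontal translations as a one-parameter subgroup and the geodesics ending at $z$ are exactly the vertical lines $L_a=\{x=a\}$. The strategy is to first prove that $\Omega$ is invariant under every horizontal translation, and then rule out the only remaining non-horoball possibility (a horocyclic strip) via a Pohozaev-type identity on the resulting ODE.

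For the parabolic invariance, I apply the moving plane method using the reflections $R_a$ across $L_a$. Each $R_a$ is an isometry fixing $z$, and hence preserves the asymptotic boundary $\{z\}$. The key geometric input is the hypothesis $\partial_\infty\Omega=\{z\}$: for every $p\in\partial_\infty\mathbb{H}^2\setminus\{z\}$ there is a neighborhood of $p$ in the compactification $\overline{\mathbb{H}^2}$ disjoint from $\Omega$. This provides the room to initiate the procedure on both horizontal sides, and an unbounded-domain version of the reflection principle of Proposition~\ref{reflection} yields symmetry of $\Omega$ across some $L_{a^*}$. Running the procedure from different starting configurations and combining the resulting symmetry axes (the composition of two reflections across distinct vertical lines is a horizontal translation) produces invariance of $\Omega$ under the whole parabolic subgroup fixing $z$.

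Parabolic invariance forces $\Omega=\{(x,y):y\in E\}$ for a connected open interval $E\subset(0,\infty)$, and by uniqueness of the positive solution to the problem, $u(x,y)=v(\log y)$ depends only on $y$. In the arclength variable $t=\log y$ the equation $\Delta_p u+f(u)=0$ reduces to the one-dimensional ODE $|v'|^{p-2}[(p-1)v''-v']+f(v)=0$. The hypothesis $\partial_\infty\Omega=\{z\}$ prevents $E$ from touching $0$ (otherwise $\Omega$ would accumulate at points of $\mathbb{R}\subset\partial_\infty\mathbb{H}^2$), so either $E=(c,\infty)$ (horoball) or $E=(c_1,c_2)$ (strip). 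Multiplying the ODE by $v'$ yields the total-derivative identity $\tfrac{d}{dt}\bigl[\tfrac{p-1}{p}|v'|^p+F(v)\bigr]=|v'|^p$ with $F'=f$; in the strip case, integrating over $(t_1,t_2)=(\log c_1,\log c_2)$, the Dirichlet condition kills the $[F(v)]$ boundary term and the overdetermined condition $|v'(t_1)|=|v'(t_2)|$ (which comes from $|\partial u/\partial\nu|=c_i|u'(c_i)|=|v'(t_i)|$ being the same constant on each boundary component) kills the $[|v'|^p]$ boundary term, leaving $\int_{t_1}^{t_2}|v'|^p\,dt=0$ and hence $v\equiv 0$, which contradicts $v>0$ on $(t_1,t_2)$. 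Therefore $E=(c,\infty)$ and $\Omega$ is the horoball $\{y>c\}$.

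The main obstacle is the moving plane step in the unbounded setting: since neither $+x$ nor $-x$ direction is a priori free of $\Omega$, one must carefully exploit the asymptotic condition $\partial_\infty\Omega=\{z\}$ together with the strong comparison principle (Theorem~\ref{PCF}) to get the procedure started, and then argue that the resulting symmetry axis can be chosen from a genuine one-parameter family so that composing two reflections across distinct vertical lines yields translation invariance rather than merely a single reflectional symmetry.
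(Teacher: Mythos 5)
There is a genuine gap in the first and decisive step of your argument. Placing the single ideal point $z$ of $\partial_\infty\Omega$ at infinity in the upper half-plane model and moving the vertical lines $L_a$ horizontally cannot be initiated: the hypothesis $\partial_\infty\Omega=\{z\}$ only forbids $\Omega$ from accumulating at \emph{finite} boundary points, and a half-plane $\{x<a\}$ is not contained in a neighborhood of any finite boundary point of the compactification --- its closure still contains $z$. Consequently there need be no $a$ for which $\Omega\cap\{x<a\}$ is empty or precompact (for the horoball $\{y>c\}$, which is the expected answer, every such cap is unbounded), so the reflection principle of Proposition~\ref{reflection}, which requires a \emph{bounded} component of the cap, never applies and the sweep never starts. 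Moreover, even granting reflectional symmetry across finitely or countably many vertical lines, composing them only generates a \emph{discrete} group of horizontal translations, under which periodic, non-strip domains are invariant; you flag this as ``the main obstacle'' but do not resolve it. The paper's proof avoids both problems by moving a different family: for each ideal point $z'\neq z$ it sweeps $\mathbb H^2$ by the geodesics $\beta_t$ orthogonal to a fixed geodesic, whose initial leaves shrink to ideal points \emph{not} in $\partial_\infty\Omega$ (so the caps are precompact and the method starts on both sides), and whose middle leaf is the geodesic joining $z'$ to $z$; letting $z'$ vary yields symmetry across \emph{every} geodesic ending at $z$, hence invariance under the full stabilizer of $z$ rather than a discrete piece of it.

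Your second step --- reducing to the ODE $(p-1)|v'|^{p-2}v''-|v'|^{p-2}v'+f(v)=0$ in the Busemann variable and killing the horocyclic strip via the identity $\frac{d}{dt}\bigl[\frac{p-1}{p}|v'|^{p}+F(v)\bigr]=|v'|^{p}$ together with the Dirichlet and overdetermined Neumann conditions on the two boundary components --- is correct and is in fact more explicit than the paper, which rules out the strip only implicitly (an internal tangency would occur at some $t<0$ during the sweep, forcing symmetry across a $\beta_t$ and hence boundedness of $\Omega$, a contradiction). Two caveats remain, however: you must justify that a positive solution on a translation-invariant strip is itself translation-invariant (``uniqueness of the positive solution'' is not available for a general Lipschitz $f$ on an unbounded domain; the symmetry of $u$ has to be extracted from the moving-plane comparison itself), and in any case this step is moot until the parabolic invariance is actually established.
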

\begin{proof}
Let $z_0=\partial_\infty\Omega$. Given any $z\in  \mathbb S^1=\partial_\infty \mathbb H^2$ let denote by $\gamma$ a geodesic of $\mathbb H^2$ joining $z=\gamma(-\infty)$ and $z_0=\gamma(+\infty)$.
    By applying an isometry of the hyperbolic plane, we can assume that $z_0=(1,0)\in \mathbb S^1$, $z=(-1,0)$ and $\gamma(s)=(\tanh (s/4),0)$, $s\in \mathbb R$. 
    Let denote by $\{\beta_t(s)\}_s$ the family of complete geodesics that intersects $\gamma^\perp(t)=(0,\tanh(t/4))$ orthogonally at  $\beta_t(0)$. Note that this family  foliates  $\mathbb H ^2 $ and $\beta_0(s)=\gamma (s)$.  Since $\partial_\infty\Omega$ consists of a single point, there exists $t_0$ such that $\beta_t$ does not intersects $\partial\Omega\subset \mathbb H^2$ for all $t<t_0$ and $\beta_{t_0}$ is tangent to $\partial \Omega$. 
    We can assume,  up to a reflection, that $t_0<0$.
    Now, we start the moving plane method in this situation. 
    For $t>t_0$ small, we denote by $C_t$ the connect component of $\Omega\cap \beta_t^-$, where $\beta_t^-$ is the half-space of $\mathbb H ^2$ determined by $\beta_t$ that contains $(0,-1)$ at its asymptotic boundary.
    We denote by $R_t:\mathbb H^2\to\mathbb H^2$ the reflection through $\beta_t$ and $C_t'=R_t(C_t)$. 
    Since $\partial\Omega$ is of class $C^2$,  if $t>t_0$ is small we have that $C_t$ is a pre-compact set and $C_t'\subset \Omega$.  

Note that $\beta_t$ cannot be a geodesic of symmetry to $\Omega$ for any $t<0$. In fact, other wise $\partial_\infty\Omega=\emptyset$. 
It follows that $C'_t\subset \Omega\cap \beta_t^+$ for all $t\in (t_0,0)$.

In the same way  as explained above, we can initiate the foliation towards $+\infty$ and decreasing the values of $t$ up to $t=0$. 
In this case, we necessarily  find a first value $t_1$ such that $\Omega$ intersects $\beta_t$ for $t>t_1$ small. 
We can then proceed in the same manner as described earlier to conclude that $C'_t$ is contained within $\Omega\cap \beta_t^-$ for all $t\in (0,t_1)$. This implies that $\Omega$ is symmetric with respect to $\gamma$.

\end{proof}

\bibliographystyle{amsplain}
\bibliography{references}

\end{document}